\newtheorem{theorem}{Theorem}[section]
\newtheorem{lemma}[theorem]{Lemma}
\newtheorem{definition}[theorem]{Definition}
\newtheorem{example}[theorem]{Example}
\newtheorem{proposition}[theorem]{Proposition}
\newtheorem{remark}[theorem]{Remark}
\newtheorem{corollary}[theorem]{Corollary}
\newtheorem{conjecture}[theorem]{Conjecture}
\numberwithin{equation}{section}
\numberwithin{figure}{section}
\title{Homology of Yang-Baxter modules}
\author{Yin Tian}
\address{School of Mathematical Sciences, Beijing Normal University; 
Laboratory of Mathematics and Complex Systems, Ministry of Education, Beijing 100875, China}
\email{{\rm yintian@bnu.edu.cn}}
\author{Xiao Wang}
\address{School of Mathematics, Jilin University, Changchun, China}
\email{{\rm wangxiaotop@jlu.edu.cn}}
\author{Yuxin Zhang}
\address{School of Mathematics, Jilin University, Changchun, China}
\email{{\rm yuxinz314@gmail.com}}
\begin{document}

\begin{abstract}
We study the Yang-Baxter operator for the vector representation $V_m$ of the quantum group $U_q(sl_m)$. We consider the one-term Yang-Baxter homology with coefficients in $V_m$-modules and provide a direct sum decomposition of the one term Yang-Baxter chain complex. The homology is explicitly computed for some specific $V_m$-modules.

\end{abstract}

\keywords{Yang-Baxter equation, Quantum groups, Yang-Baxter homology}

\subjclass[2020]{Primary: 16T25. Secondary: 57K10.}

\maketitle

\tableofcontents

\section{Introduction}
Yang-Baxter equation is a key object both in mathematics and physics. It was discovered by C. N. Yang\cite{Yan} and R. J. Baxter\cite{Bax} independently. Solutions to the Yang-Baxter equation are deeply related with quantum groups, and can be used to construct invariants of links such as Jones polynomial\cite{Jon} and of manifolds such as Rushtikin-Turaev invariants\cite{RT}. For any Yang-Baxter operator, there is a homology theory, which generalizes the quandle homology theory and the latter has been proved useful in knot theory. In this article, we study the one-term homology theory of Yang-Baxter operators, in particular, focusing on the ones $R_{m}$ coming from quantum groups $U_{q}(sl_m)$. We expect our study can help understanding better the two-term Yang-Baxter homology, which is promising in constructing new invariants for links and manifolds.  Besides, during our work, we have found interesting algebraic structures coming from the Yang-Baxter operators themselves.  We hope our results have reflections on the theory of quantum groups as well.

Given a Yang-Baxter operator $(V,R)$, there is a unital assoiciative algebra $F(V)$. 
The notion of $V$-modules(see Definition \ref{Mod-V}) coincides with that of $F(V)$-modules. 
For each $V$-module $M$, we have a one-term Yang-Baxter chain complex $C(M)$:
$$\cdots \to M \otimes V^{\otimes n} \to M \otimes V^{\otimes n-1} \to \cdots \to M \otimes V \to M,$$
where the differential is determined by the Yang-Baxter operator $R$. 

In this paper, we focus on the Yang-Baxter operators $(V_m, R_m)$, where $V_m$ is the vector representation of 
$U_{q}(sl_m)$. The groundfield is $\mathbb{K}=\mathbb{C}(y)$, where $y$ is the quantum parameter.
The associative algebra $F(V_m)$ is the polynomial algebra in $m$ variables.   
Our main result is an explicit computation of the one-term Yang-Baxter homology. 

\begin{theorem} \label{thm main}
For the Yang-Baxter operator $(V_m, R_m)$, and any $V_m$-module $M$, we have the followings. 
\begin{enumerate}
\item The one-term Yang-Baxter complex $C(M)$ is isomorphic to a tensor product $C^f(M) \otimes B(V_m)$, 
where $C^f(M)$ is a finite complex of length $m$, and $B(V_m)$ is a graded vector space only depending on $(V_m,R_m)$.
\item The finite complex $C^f(M)$ is isomorphic to $M\otimes_{F(V_{m})} K$, where $K$ is the Koszul resolution of the one dimensional left  $F(V_m)$-module $\mathbb{K}$. 
\item The graded vector space $B(V_m)$ is a free algebra generated by $b_i$ elements of degree $i$, for $2 \le i \le m+1$, where $1-\sum\limits_{i=2}^{m+1}b_iq^i=(1-mq)(1+q)^m$. 
\end{enumerate}
\end{theorem}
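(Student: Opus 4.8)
The plan is to prove the three parts in the order (2), (1), (3), since the Koszul identification controls the finite part and the Poincar\'e-series bookkeeping in (3) is forced once (1) and (2) are in place. Throughout I would use the equivalence between $V_m$-modules and $F(V_m)$-modules together with the fact that $F(V_m)=\mathbb{K}[x_1,\dots,x_m]$ is the polynomial algebra, so that $M$ is an ordinary module over a regular commutative ring and the standard homological inputs (Koszulity, the Koszul resolution) are available. The first concrete task is to write the one-term differential $\partial_n\colon M\otimes V_m^{\otimes n}\to M\otimes V_m^{\otimes n-1}$ explicitly in terms of $R_m$ and the module action, and to record the Hecke-type eigenspace decomposition of $R_m$ on $V_m\otimes V_m$, i.e.\ the splitting into the symmetric part (which presents the quadratic relations of $F(V_m)$) and the antisymmetric part (which presents the quadratic dual $\Lambda(\mathbb{K}^m)$). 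These two pieces of data drive everything that follows.

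For part (2) I would exhibit an explicit isomorphism of complexes $C^f(M)\cong M\otimes_{F(V_m)}K$, where $K$ is the Koszul resolution with $K_i=F(V_m)\otimes\Lambda^i(\mathbb{K}^m)$ and $M\otimes_{F(V_m)}K_i\cong M\otimes\Lambda^i(\mathbb{K}^m)$. Since both complexes have length $m$ and agree term by term once $C^f(M)$ is identified with the antisymmetrized summand of $C(M)$, the content is to check that the restriction of $\partial_n$ to this summand is the Koszul differential, namely contraction against $\sum_i x_i\otimes e_i^{*}$. This is a finite computation using the antisymmetrizer idempotent in $V_m^{\otimes i}$ built from $R_m$, and I expect it to be routine once the eigenvalue normalization is fixed.

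Part (1) is the heart of the argument. The goal is a splitting of the whole complex $C(M)$ as $C^f(M)\otimes B(V_m)$ with $B(V_m)$ carrying the zero differential. I would use the symmetrizer and antisymmetrizer projections coming from the Hecke relation to decompose each chain group $M\otimes V_m^{\otimes n}$, organize these pieces by a filtration adapted to the number of antisymmetric descents, and show that with respect to this filtration $\partial$ is block triangular with diagonal block equal to $\partial^f\otimes\mathrm{id}_{B}$. A change of basis (a chain automorphism of $C(M)$, equivalently a homological-perturbation or Gaussian-elimination argument cancelling the acyclic off-diagonal correction) should then produce the stated tensor decomposition; here $B(V_m)$ is the explicit complementary graded space, manifestly depending only on $(V_m,R_m)$ and not on $M$. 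The main obstacle is precisely this step: verifying that the correction terms are acyclic and can be removed while preserving the tensor factorization, and simultaneously equipping the complement $B(V_m)$ with an algebra structure and an explicit (e.g.\ Lyndon-word or PBW-type) basis exhibiting it as a free associative algebra.

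Finally, part (3) follows by Poincar\'e-series bookkeeping once (1), (2) and the freeness of $B(V_m)$ are established. For finite-dimensional $M$ the series of $C(M)$ in the homological variable $q$ is $P_{C(M)}(q)=\dim_{\mathbb{K}}(M)\sum_{n\ge 0}m^{n}q^{n}=\dim_{\mathbb{K}}(M)/(1-mq)$, while (2) gives $P_{C^f(M)}(q)=\dim_{\mathbb{K}}(M)\sum_{i=0}^{m}\binom{m}{i}q^{i}=\dim_{\mathbb{K}}(M)(1+q)^m$. The decomposition in (1) forces $P_{B(V_m)}(q)=1/\bigl((1-mq)(1+q)^m\bigr)$. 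Since $B(V_m)$ is free, its series equals $1/\bigl(1-\sum_i b_i q^i\bigr)$, whence $\sum_{i}b_i q^i=1-(1-mq)(1+q)^m$; expanding the right-hand side shows the constant and linear coefficients vanish and the top degree is $m+1$, so the generators lie in degrees $2\le i\le m+1$ and satisfy $1-\sum_{i=2}^{m+1}b_i q^i=(1-mq)(1+q)^m$, as claimed.
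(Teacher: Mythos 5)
Your outline for parts (2) and (3) tracks the paper closely in spirit (the Koszul comparison via an antisymmetrized summand, with the quantum-integer normalization you allude to appearing in the paper as the factor $\tfrac{1}{[k]_{y^2}!}$ in Theorem \ref{Koszul finite}; the generating-function arithmetic in (3) is the same computation as the paper's dimension recursions in Lemma \ref{FomulaLemma}). But part (1), which you rightly call the heart, contains a genuine gap rather than a proof. Your strategy --- filter $M\otimes V_m^{\otimes n}$ by ``antisymmetric descents,'' observe block-triangularity of $\partial$, then cancel the off-diagonal correction by Gaussian elimination or homological perturbation --- cannot deliver the statement as written: Gaussian elimination produces a deformation retract (a homotopy equivalence onto a smaller complex), not an isomorphism $C(M)\cong C^f(M)\otimes B(V_m)$, and a filtered complex is in general \emph{not} isomorphic to its associated graded, so your proposed ``chain automorphism cancelling the acyclic correction'' requires a reason that the filtration splits, which you do not supply. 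You flag this yourself as ``the main obstacle,'' and you also defer the algebra structure and the freeness of $B(V_m)$ to the same unresolved step; since your Poincar\'e-series bookkeeping in (3) is valid only \emph{after} freeness is known (it determines the $b_i$, it cannot prove freeness), both (1) and (3) remain unestablished in your plan. Note also that for $n>2$ the Hecke symmetrizer/antisymmetrizer calculus does not by itself produce a canonical descent-indexed decomposition of $V_m^{\otimes n}$, and you never verify that your pieces exhaust the chain group --- a nontrivial dimension count in the paper.

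What closes the gap in the paper is a single idea your sketch is missing: the operator $\sigma_n=\sum_{i=1}^{n}(-1)^{i-1}d_i^n$, through which the differential factors as $\partial^{YB}_n=(R_M\otimes\mathrm{id}_{V^{\otimes n-1}})\circ(\mathrm{id}_M\otimes\sigma_n)$. Theorem \ref{main} shows that $V_m^{\otimes n}=\ker\sigma_n\oplus(V\otimes\ker\sigma_{n-1})\oplus([V]_2\otimes\ker\sigma_{n-2})\oplus\cdots\oplus[V]_n$ is an \emph{eigenspace} decomposition of $\sigma_n$, with pairwise distinct eigenvalues the quantum integers $[k]_{y^2}$ (distinct since $\mathbb{K}=\mathbb{C}(y)$); exhaustion is proved by a separate dimension count using the composites $\phi_n^i$. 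Because the summands are eigenspaces and Lemma \ref{prop4.4} shows $\partial$ carries $M\otimes[V]_k\otimes\ker\sigma_{n-k}$ into $M\otimes[V]_{k-1}\otimes\ker\sigma_{n-k}$, the splitting of the complex is exact on the nose (Theorem \ref{complex decom}) --- no filtration, triangularity, or perturbation enters at all. The algebra structure on $B(V_m)=\bigoplus_n\ker\sigma_n$ then comes for free from $\ker\sigma_s\otimes\ker\sigma_t\subset\ker\sigma_{s+t}$ (Proposition \ref{KernelTensor}), and freeness with $b_i=(i-1)\binom{m+1}{i}$ concentrated in degrees $2\le i\le m+1$ is proved by the complement construction $\widetilde{\ker}\sigma_n$ together with the dimension recursions (Definition \ref{tilde}, Corollary \ref{CorollaryofDS}, Theorems \ref{KernelThm} and \ref{3.23Thm}) --- the same arithmetic as yours, but run in the direction that proves, rather than assumes, the structure. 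To repair your proposal you would need to replace the perturbation step with an argument that canonically splits your filtration; identifying the eigenvalues of $\sigma_n$ is precisely such an argument.
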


{\em Further discussions:}

The generators of $B(V_m)$ are related to some canonical basis of $V_m^{\otimes n}$; It is interesting to discuss the two-term Yang-Baxter homology using similar ideas in this paper; It is natural to consider Yang-Baxter operators for other representations of $U_{q}(sl_m)$.

The paper is organised as follows. In Section 2, we define the notion of $V$-modules and the associative algebra $F(V)$. Then, we show the equivalence between $V$-modules and $F(V)$-modules. We also recall the definition of Yang-Baxter equation and that of the one-term Yang-Baxter homology of $(V,R)$ with coefficients in a $V$-module $M$.
In Section 3, we define an operator $\sigma_{n}$ that depends only on the Yang-Baxter operators $(V,R)$ themselves and study its properties in the case of $(V_m,R_m)$. Through $\sigma_n$, we obtain an eigenspace decomposition of $V_{m}^{\otimes n}$, which leads to a decomposition of the one-term Yang-Baxter chain complex of $(V_m,R_m)$. Furthermore, the above-mentioned $B(V_m):=\oplus_{n}\ker\sigma_{n}$ has a graded algebraic structure, and we know the dimension of each $\ker\sigma_{n}$. For $m=2,3$, we explicitly provide generators. We also compute the one-term Yang-Baxter homology of $R_{m}$ with coefficients in $F(V_{m})$ and certain finite dimensional $V_{m}$-modules.

\section{Preliminary}
\begin{definition}\label{Definition 1.1}
Let $k$ be a commutative ring and $V$ be a $k$-module.  If a $k$-linear map, $R:$ $V\otimes V \to V\otimes V$, satisfies the following equation\\
$$(R\otimes \mathrm{id}_{V})\circ (\mathrm{id}_{V}\otimes R)\circ (R\otimes \mathrm{id}_{V})=(\mathrm{id}_{V}\otimes R)\circ (R\otimes \mathrm{id}_{V})\circ (\mathrm{id}_{V}\otimes R),$$\ \\
then we say $(V,R)$ is a pre-Yang-Baxter operator. The equation above is called a Yang-Baxter equation.  If, in addition, $R$ is invertible, then we say $(V,R)$ is a Yang-Baxter operator. 
\end{definition}

\begin{remark}

In this article, we usually abbreviate the notation $(V,R)$ simply by $R$ when $V$ is clear from context.
    
\end{remark}

The following is the main example of Yang-Baxter operators we consider in this article.

\begin{example}\label{Example}\cite{PW2}
Let $\mathbb{K}=\mathbb{C}(y)$ and $V_{m}=\mathbb{K}\{v_{1},\cdots v_{m}\}$ be an $m$ dimension $\mathbb{K}$-space. We give a family of Yang-Baxter operators
    \[
    \begin{aligned}
         R_{m}:V_{m}\otimes V_{m}&\to V_{m}\otimes V_{m} \\
         v_{i}\otimes v_{j} &\mapsto (1-y^{2})v_{i}\otimes v_{j}+y^{2}v_{j}\otimes v_{i}\\
         v_{i}\otimes v_{i}&\mapsto v_{i}\otimes v_{i}\\
         v_{j}\otimes v_{i}&\mapsto v_{i}\otimes v_{j}
    \end{aligned}
    \]
    where $1\le i<j\le m$. For example, when $m=2$, it is 
$$
 \left[
 \begin{matrix}
   1 & 0 & 0 & 0 \\
   0 & 1-y^{2} & 1 & 0 \\
   0 & y^{2} & 0 & 0 \\
   0 & 0 & 0 & 1
  \end{matrix}
  \right]
$$
\end{example}

\begin{remark}

This family of Yang-Baxter operators leads to $sl_m$ quantum invariants following the same way as in \cite{Tur}, see \cite{PW,PW2}.
    
\end{remark}  

The next definition is about a condition naturally required for later defining the homology of Yang-Baxter operators.

\begin{definition}\label{walld}

Consider a linear map $R_{M} :M\otimes V \to M,$ such that $R_{M}\circ (R_{M}\otimes {\rm id}_{V})=R_{M}\circ (R_{M}\otimes {\rm id}_{V})\circ ({\rm id}_{M}\otimes R)$ as shown graphically in Figure \ref{wall}, we call this the wall condition.\ \\

\begin{figure}[htb]
\centerline{\psfig{file=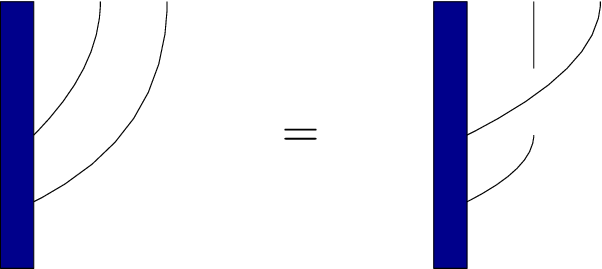,width=3.0in}}
\vspace*{8pt}
\caption{the wall condition.}\label{wall}
\end{figure}
    
\end{definition}

\begin{definition}\label{Mod-V}

Let $R$ be a Yang-Baxter operator, where $V$ is a $k$-module. We define a category associated to it, named Mod-$V$. 
\begin{enumerate}

\item Ob(Mod-$V$) consists of $k$-modules $M$ together with a $k$-module morphism $R_{M}:M\otimes V \rightarrow M$ satisfying the wall condition in Definition\ref{walld}, and we call the objects $V$-modules of the Yang-Baxter operator $R$.  When the operator $R$ is fixed, we abbreviate it as a $V$-module.

\item Mor($M$,$N$) of $V$-modules $M$ and $N$ consists of $k$-module morphisms $f:M\rightarrow N$ preserving the wall condition, i.e. $$f(R_{M}(m\otimes v))=R_{N}(f(m)\otimes v)$$
we call such $f$ a $V$-module morphism of the Yang-Baxter operator $R$.  When the operator $R$ is fixed, we abbreviate it as a $V$-module morphism.

\end{enumerate}

\end{definition}

Actually, the category just defined is equivalent to the category of modules over certain unital associative algebra we are going to define in the following. 

\begin{definition}

Let $R$ be a Yang-Baxter operator. We define an algebra $F(V)$ to be the quotient algebra of the tensor algebra $TV$ by the ideal generated by $\mathrm{im}({\rm id}_{V^{\otimes 2}}-R)$, i.e. $F(V):=TV/(\mathrm{im}({\rm id}_{V^{\otimes 2}}-R))$. For the case of set-theoretic Yang-Baxter operators, this is the same as the structure monoid(group), see \cite{Joy, Sol}.
    
\end{definition}
\begin{example}
    When we take $V=V_{m}$, ${\rm im}\,({\rm id}_{V^{\otimes 2}}-R_{m})$ is a $\mathbb{K}$-vector space spanned by a set of basis $\{v_{i}\otimes v_{j}-v_{j}\otimes v_{i}\mid {1\le i<j\le m}\}$. Thus $F(V_m)$ is the polynomial algebra in $m $ variables over $\mathbb{K}$, i.e. $$F(V_{m}):=TV/(v_{i}\otimes v_{j}-v_{j}\otimes v_{i})=\mathbb{K}[v_{1},\cdots,v_{m}].$$
\end{example}
\begin{lemma}\label{equiv}

For a given Yang-Baxter operator $(V,R)$. The $V$-module category Mod-$V$ is equivalent to the category of right $F(V)$-modules $Mod_{F(V)}$.
\[
\begin{tikzcd}
    |[draw]| \text{modules of the Yang-Baxter operator $(V,R)$} \arrow[r,leftrightarrow] & |[draw]| \text{modules of the associative algebra $F(V)$}
\end{tikzcd}
\]

\end{lemma}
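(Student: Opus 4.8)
The plan is to establish the equivalence by constructing functors in both directions and showing they are mutually inverse. The key observation is that the defining relation of $F(V)$—quotienting $TV$ by $\mathrm{im}(\mathrm{id}_{V^{\otimes 2}}-R)$—is precisely designed to encode the wall condition. First I would unwind what a right $F(V)$-module structure amounts to concretely. Since $F(V) = TV/(\mathrm{im}(\mathrm{id}_{V^{\otimes 2}}-R))$, a right action $M \otimes F(V) \to M$ is equivalent to a right $TV$-action that factors through the relations. A right $TV$-module is simply a $k$-module $M$ together with a single map $M \otimes V \to M$ (the action by degree-one elements), since $TV$ is the free associative algebra and the action of higher tensors is then forced by associativity. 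So the data of a right $TV$-module is exactly the data of a map $R_M : M \otimes V \to M$, with no further constraint.

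The heart of the argument is then to verify that the map $R_M : M \otimes V \to M$ descends to an $F(V)$-action (i.e. respects the ideal $(\mathrm{im}(\mathrm{id}_{V^{\otimes 2}}-R))$) if and only if $R_M$ satisfies the wall condition of Definition~\ref{walld}. I would translate the condition that the action kills the two-sided ideal into the condition that it kills the generating subspace $\mathrm{im}(\mathrm{id}_{V^{\otimes 2}}-R) \subseteq V \otimes V$, when this subspace acts on $M$ from the right. Acting on $M$ by an element of the form $(\mathrm{id}-R)(v\otimes w)$ via the iterated $R_M$ gives $R_M \circ (R_M \otimes \mathrm{id}_V) \circ (\mathrm{id}_M \otimes (\mathrm{id}_{V^{\otimes 2}} - R))$, and requiring this to vanish is exactly the equation $R_M \circ (R_M \otimes \mathrm{id}_V) = R_M \circ (R_M \otimes \mathrm{id}_V)\circ(\mathrm{id}_M \otimes R)$. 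The subtle point I must check is that killing the degree-two generators automatically kills the entire two-sided ideal they generate: multiplying a generator on the left by an element of $TV$ and on the right produces higher-degree elements, and I need the wall condition to propagate through these. This is where the Yang-Baxter equation for $R$ itself enters—it guarantees consistency of the iterated action, so that the relation imposed in low degree remains compatible in all degrees.

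On morphisms, I would check that the two notions coincide directly: a $k$-linear map $f : M \to N$ is a $V$-module morphism precisely when $f(R_M(m\otimes v)) = R_N(f(m)\otimes v)$, which is exactly the statement that $f$ is a right $F(V)$-module homomorphism once one knows the action is generated in degree one. Functoriality and the fact that the two constructions are inverse to each other on objects and morphisms are then formal, since both recover the same underlying $k$-module and the same structure map $R_M$.

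The main obstacle I anticipate is the propagation step: verifying that the wall condition, which is a statement only about the degree-two generators of the ideal, suffices to guarantee that $R_M$ annihilates the \emph{entire} two-sided ideal in all degrees. Concretely, I would show that for any $a \in TV$ the element $a \cdot (\mathrm{id}_{V^{\otimes 2}} - R)(v\otimes w) \cdot b$ acts as zero; left multiplication by $a$ is handled automatically by associativity of the iterated action, but right multiplication by $b$ requires sliding the relation past $b$, and this sliding is exactly what the Yang-Baxter equation for $R$ together with the wall condition make consistent. I expect this to be the one place where a genuine (if short) inductive or diagrammatic computation is needed, and I would likely present it graphically in the style of Figure~\ref{wall}.
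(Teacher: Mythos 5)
Your proposal is correct and follows essentially the same route as the paper: define the right $F(V)$-action by its degree-one part $\mu_M|_{M\otimes V}=R_M$ (using that a right $TV$-module structure is freely determined by the action of $V$), observe that well-definedness modulo the ideal is exactly the wall condition of Definition~\ref{walld}, recover $R_M$ by restriction in the converse direction via $[v_i\otimes v_j]=[R(v_i\otimes v_j)]$ in $F(V)$, and note that the two notions of morphism coincide verbatim. The one point to correct is the obstacle you anticipate at the end: no Yang-Baxter equation, and no ``sliding past $b$'', is needed to propagate the relation from the degree-two generators to the whole two-sided ideal. The wall condition is an identity of maps on $M\otimes V^{\otimes 2}$, that is, $m'\cdot s=0$ for \emph{every} $m'\in M$ and every $s\in\mathrm{im}(\mathrm{id}_{V^{\otimes 2}}-R)$. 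Hence for any $a,b\in TV$, associativity of the right $TV$-action gives
\[
m\cdot(a\,s\,b)=\bigl((m\cdot a)\cdot s\bigr)\cdot b=0\cdot b=0,
\]
since $m\cdot a$ again lies in $M$; right multiplication by $b$ acts on an element that is already zero. So the propagation step is purely formal, and the short inductive or diagrammatic computation you expect does not arise. In particular this lemma uses neither the invertibility of $R$ nor the Yang-Baxter equation: it holds for an arbitrary linear map $R:V^{\otimes 2}\to V^{\otimes 2}$ together with the associated wall condition, which is consistent with the fact that the paper's own (terser) proof invokes only the wall condition and the defining relations of $F(V)$.
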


\begin{proof}
For $V$-module $(M,R_{M}:M\otimes V\rightarrow M)$, we can equip it with a right $F(V)$-module structure $(M,\mu_{M}:M\otimes F(V)\rightarrow M)$ by defining $\mu_{M}|_{M\otimes V}=R_{M}$(here we identify $M\otimes V$ as its embedding in $M\otimes F(V)$). Its well-definedness follows from the fact that $R_{M}$ satisfies the wall condition. 

Conversely, we can provide $F(V)$-module $(M,\mu_{M})$ with a $V$-module structure $(M,R_{M})$ by defining $R_{M}=\mu_{M}|_{M\otimes V}$. And $R_{M}$ satisfies wall condition since $[v_{i}\otimes v_{j}]=[R(v_{i}\otimes v_{j})]$ in $F(V)$. Now, it is routine to check that a map $f:M\rightarrow N$ is a $V$-module morphism if and only if it is an $F(V)$-module morphism. 
Therefore, for $V$-modules $M$ and $N$,\[
\mathcal{G}: Mod\text{-}V \to {Mod}_{F(V)}
\]
\[
\begin{tikzcd}
(M, R_M) \arrow[r, mapsto] \arrow[d, "f"] & (M, \mu_M) \arrow[d, "\mathcal{G}(f):=f"] \\
(N, R_N) \arrow[r, mapsto] & (N, \mu_N)
\end{tikzcd}
\]

is the equivalence.
\end{proof}
\begin{remark}\
\begin{enumerate}
    \item $F(V)$ is right $F(V)$-module, so we can regard it as $V$-module as above.
    \item We use the notation $mv$ for both $R_{M}(m\otimes v)$ and $\mu_{M}(m\otimes v)$ whenever it is clear from  context. 
\end{enumerate}
   
\end{remark}
\begin{definition}\label{Definition 4.2}{\rm (\cite{Leb-1,Prz-2})}
Let $M$ be a $V$-module, and $C^{YB}_{n}=M\otimes V^{\otimes n}$. We define the face map by $d_{1,n}:=R_{M}\otimes \mathrm{id}_{V^{\otimes n-1}}$ and $d_{i,n}:C^{YB}_{n}\to C^{YB}_{n-1}$, $1<i\leq n$ by 
$$d_{i,n}:=(R_{M}\otimes \mathrm{id}_{V^{\otimes n-1}})\circ (\mathrm{id}_{M}\otimes R \otimes \mathrm{id}_{V^{\otimes n-2}})\circ \cdots \circ (\mathrm{id}_{M}\otimes \mathrm{id}_{V^{\otimes i-2}}\otimes R\otimes \mathrm{id}_{V^{\otimes n-i}}).$$
An interpretation the face maps is shown in Figure \ref{face map}.

\begin{figure}[h!]
\centerline{\psfig{file=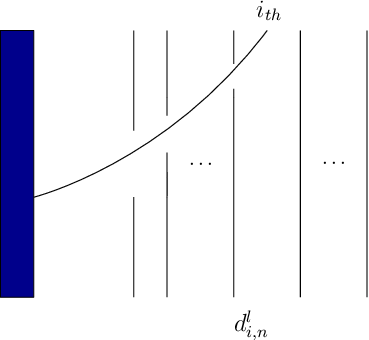,width=2.2in}}
\vspace*{8pt}
\caption{face map $d_{i,n}$.}\label{face map}
\end{figure}
 We call $C^{YB}(M):=(C^{YB}_{n},\partial^{YB}_{n})$ the one-term pre-Yang-Baxter chain complex with coefficients in the $V$-module $M$, where $\partial^{YB}_{n}=\sum_{i=1}^{n}(-1)^{i-1}d_{i,n}$.  Its homology is called the one-term Yang-Baxter homology with coefficients in the $V$-module $M$.
\end{definition}

\begin{lemma}\label{SES}

If $0\rightarrow M_1\rightarrow M_2\rightarrow M_3\rightarrow 0$ is a short exact sequence of $V$-modules, we have the corresponding long exact sequence of their Yang-Baxter homology. 
    
\end{lemma}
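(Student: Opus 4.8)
The plan is to reduce the statement to the standard zig-zag (long exact sequence) lemma of homological algebra by showing that the short exact sequence of $V$-modules induces a short exact sequence of one-term Yang-Baxter chain complexes
\[
0 \to C^{YB}(M_1) \to C^{YB}(M_2) \to C^{YB}(M_3) \to 0.
\]
Write $f:M_1\to M_2$ and $g:M_2\to M_3$ for the two $V$-module morphisms. At chain level $n$ I take the maps $f_n := f\otimes \mathrm{id}_{V^{\otimes n}}$ and $g_n := g\otimes \mathrm{id}_{V^{\otimes n}}$.

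First I would check exactness at each fixed level $n$. The chain groups are $C^{YB}_n(M_i)=M_i\otimes V^{\otimes n}$, and $V^{\otimes n}$ is a $\mathbb{K}$-vector space (equivalently a free $k$-module), so the functor $-\otimes V^{\otimes n}$ is exact. Applying it to $0\to M_1\to M_2\to M_3\to 0$ immediately yields the short exact sequence $0 \to M_1\otimes V^{\otimes n} \to M_2\otimes V^{\otimes n} \to M_3\otimes V^{\otimes n} \to 0$ for every $n$.

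Second, I would verify that $f_n$ and $g_n$ are chain maps, i.e. that they commute with the differentials $\partial^{YB}_n=\sum_{i=1}^n(-1)^{i-1}d_{i,n}$; it suffices to commute with each face map $d_{i,n}$. By Definition \ref{Definition 4.2}, the face map $d_{i,n}$ is a composite of operators of the form $\mathrm{id}_M\otimes\mathrm{id}_{V^{\otimes j}}\otimes R\otimes\mathrm{id}_{V^{\otimes \ell}}$ (which act only on $V$-factors and are literally the same in all three complexes) followed by one application of $R_{M}\otimes\mathrm{id}_{V^{\otimes n-1}}$. The intermediate operators commute with $f\otimes\mathrm{id}$ trivially, since $f$ touches only the $M$-factor while these $R$'s touch only $V$-factors. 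For the leading factor, the defining property of a $V$-module morphism from Definition \ref{Mod-V}, namely $f\circ R_{M_1}=R_{M_2}\circ(f\otimes\mathrm{id}_V)$, gives $(f\otimes\mathrm{id}_{V^{\otimes n-1}})\circ(R_{M_1}\otimes\mathrm{id}_{V^{\otimes n-1}})=(R_{M_2}\otimes\mathrm{id}_{V^{\otimes n-1}})\circ(f\otimes\mathrm{id}_{V^{\otimes n}})$. Composing, $f_n$ commutes with $d_{i,n}$ for every $i$, hence with $\partial^{YB}_n$, and the same argument applies verbatim to $g_n$.

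Having established a short exact sequence of complexes, the conclusion follows from the zig-zag lemma: the connecting homomorphism is produced by the snake lemma and yields the long exact sequence
\[
\cdots \to H_n(C^{YB}(M_1)) \to H_n(C^{YB}(M_2)) \to H_n(C^{YB}(M_3)) \to H_{n-1}(C^{YB}(M_1)) \to \cdots.
\]
I do not expect any genuine obstacle here; the only point requiring care is the bookkeeping in the chain-map verification --- matching tensor-factor positions so that the module-morphism identity is applied on the correct (leftmost) factor and the internal $R$-operators are seen to act on factors disjoint from the one moved by $f$.
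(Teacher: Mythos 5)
Your proof is correct and is precisely the standard argument the paper implicitly relies on, since Lemma \ref{SES} is stated there without proof: levelwise exactness of $0\to M_1\otimes V^{\otimes n}\to M_2\otimes V^{\otimes n}\to M_3\otimes V^{\otimes n}\to 0$, the chain-map property of $f\otimes\mathrm{id}$ and $g\otimes\mathrm{id}$ via the $V$-module morphism identity of Definition \ref{Mod-V}, and the zig-zag lemma. Your parenthetical justification of exactness is the one point needing the paper's standing hypotheses --- over a general commutative ring $k$ one needs $V$ flat, which holds here since the ground ring is the field $\mathbb{K}=\mathbb{C}(y)$ --- and you flagged exactly that, so there is nothing to add.
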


\begin{remark}

By Lemma \ref{equiv}, one has a short exact sequence of $V$-modules if and only if one has a short exact sequence of $F(V)$-modules. One can define Tor and Ext functors on Mod-$V$ induced from those of the category of $F(V)$-modules. 
    
\end{remark}

\section{One term Yang-Baxter homology of $sl_{m}$ operators}
In this section, we introduce the operator $\sigma_{n}$ defined in Definition \ref{sigD} and study its properties. It is an important object when computing the one-term Yang-Baxter homology with coefficients in $V_{m}$-modules.

\subsection{One eigenspace decomposition from $\sigma_{n}$}
\begin{definition}\label{sigD}
  
   Let $d^{n}_{k}$ be a linear map from $V^{\otimes n}$ to $V^{\otimes n}$ defined as below \footnote{$d_{k}^{n}$ is different from $d_{k,n}$ defined in Definition \ref{Definition 4.2}. Actually, $d_{k,n}=(R_{M}\otimes {\rm id}_{V^{\otimes n-1}})\circ d_{k}^{n}$. }{\rm :} 
   
   When $k=1$, $d_{1}^{n}:=\mathrm{id}_{V^{\otimes n}}$ and when $k>1$
   $$d^{n}_{k}:=( R \otimes \mathrm{id}_{V^{\otimes n-2}})\circ (\mathrm{id}_{V}\otimes R\otimes \mathrm{id}_{V^{\otimes n-3}})\circ \cdots \circ (\mathrm{id}_{V^{\otimes k-2}}\otimes R\otimes \mathrm{id}_{V^{\otimes n-k}}).$$ 
   
   We define a linear map $\sigma_{n}:V^{\otimes n}\to V^{\otimes n}$ by 
   $\sigma_{n}=\sum_{i=1}^{n}(-1)^{i-1}d^{n}_{i}.$
   A graphical illustration of them is shown in Figure \ref{sig} \rm{:}
    \begin{figure}[htb]
        \includegraphics[width=0.9\linewidth]{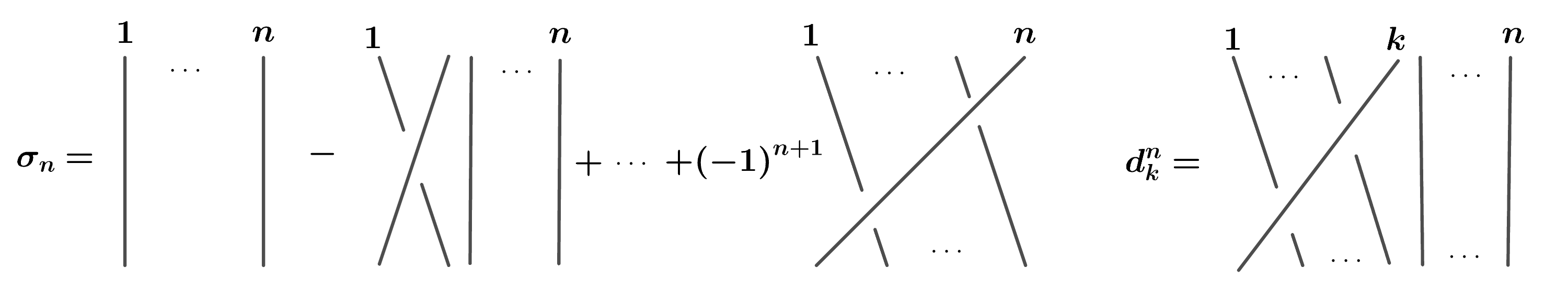}
        \caption{$\sigma_{n}$ and ${d_{k}^{n}}$}\label{sig}
    \end{figure}

\end{definition}

\begin{remark}

We note that $\partial^{YB}_{n}
=(R_{M}\otimes \mathrm{id}_{V^{\otimes  n-1}})\circ (\mathrm{id}_{M}\otimes \sigma_{n})$.
    
\end{remark}

Here we provide some properties of $\sigma_{n}$ for general Yang-Baxter operators.

\begin{lemma}\label{sigL}The following identities hold for any Yang-Baxter operator $R$:

\begin{enumerate}
    \item 
      $ \sigma_{n}=\sigma_{k}\otimes \mathrm{id}_{V^{\otimes n-k}} +(-1)^{k} ({d_{k+1}^{n} }\otimes \mathrm{id}_{V^{\otimes n-k-1}}) \circ (\mathrm{id}_{V^{\otimes n-k}}\otimes \sigma_{n-k}).$
\item $\sigma_{n}\mid_{V^{\otimes k}\otimes \ker \sigma_{n-k}}=\sigma_{k}\otimes \mathrm{id}_{V^{\otimes n-k}}\  and\  \sigma_{n}({V^{\otimes k}\otimes \ker \sigma_{n-k}} )\subset {V^{\otimes k}\otimes \ker \sigma_{n-k}}.$
\item $\forall i\le j,\  (\mathrm{id}_{V}\otimes d_{j}^{n-1})\circ d_{i}^{n}-(\mathrm{id}_{V}\otimes d_{i}^{n-1})\circ d_{j+1}^{n}=(\sigma_{2}\otimes \mathrm{id}_{V^{\otimes n-2}})\circ (\mathrm{id}_{V}\otimes d_{j}^{n-1})\circ d_{i}^{n}.$
\item $(\mathrm{id}_{V}\otimes \sigma_{n-1})\circ \sigma_{n}=(\sigma_{2}\otimes \mathrm{id}_{V^{\otimes n-2}})\circ \psi_{n}$, where $$\psi_{n}=\sum_{i\le j}(-1)^{i+j} (\mathrm{id}_{V}\otimes d_{j}^{n-1})\circ d_{i}^{n}.$$ 
\end{enumerate}
 
\end{lemma}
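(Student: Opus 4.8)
The plan is to prove the four identities in order, deriving (2) from (1) and (4) from (3), and using the Yang-Baxter equation in an essential way only in (3). Throughout I write $c_\ell:=\mathrm{id}_{V^{\otimes \ell-1}}\otimes R\otimes\mathrm{id}_{V^{\otimes n-\ell-1}}$ for the copy of $R$ on the $\ell$-th and $(\ell+1)$-st factors, so that by definition $d_k^n=c_1\circ c_2\circ\cdots\circ c_{k-1}$ (rightmost applied first, $d_1^n=\mathrm{id}$) and one has the recursion $d_k^n=c_1\circ(\mathrm{id}_V\otimes d_{k-1}^{n-1})$. In this notation the Yang-Baxter equation reads $c_\ell c_{\ell+1}c_\ell=c_{\ell+1}c_\ell c_{\ell+1}$, and distant factors commute, $c_\ell c_{\ell'}=c_{\ell'}c_\ell$ whenever $|\ell-\ell'|\ge 2$.

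For (1) I would split $\sigma_n=\sum_{i=1}^{n}(-1)^{i-1}d_i^n$ at $i=k$. When $i\le k$ the map $d_i^n$ only involves $R$'s among the first $k$ factors, so $d_i^n=d_i^k\otimes\mathrm{id}_{V^{\otimes n-k}}$ and these terms assemble to $\sigma_k\otimes\mathrm{id}_{V^{\otimes n-k}}$. When $i>k$ the key observation is the factorization $d_i^n=d_{k+1}^n\circ(\mathrm{id}_{V^{\otimes k}}\otimes d_{i-k}^{n-k})$: one first pulls the $i$-th strand leftward to slot $k+1$, an operation on the last $n-k$ factors and hence equal to $\mathrm{id}_{V^{\otimes k}}\otimes d_{i-k}^{n-k}$, and then pulls it from slot $k+1$ to the front, which is $d_{k+1}^n$. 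Reindexing by $j=i-k$ and factoring out the common $d_{k+1}^n$ on the left collects these terms into $(-1)^{k}\,d_{k+1}^n\circ(\mathrm{id}_{V^{\otimes k}}\otimes\sigma_{n-k})$, which is (1). Statement (2) is then immediate: on $V^{\otimes k}\otimes\ker\sigma_{n-k}$ the second summand vanishes because its last factor $\mathrm{id}_{V^{\otimes k}}\otimes\sigma_{n-k}$ does, leaving $\sigma_k\otimes\mathrm{id}_{V^{\otimes n-k}}$, which visibly preserves $V^{\otimes k}\otimes\ker\sigma_{n-k}$. This whole step is bookkeeping and uses nothing beyond the definitions.

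The heart of the lemma is (3), which I would first recast. Since $\sigma_2=\mathrm{id}_{V^{\otimes 2}}-R$, moving the $\sigma_2$-term across shows the claim is equivalent to the purely positive identity
\[(\mathrm{id}_V\otimes d_i^{n-1})\circ d_{j+1}^n=(R\otimes\mathrm{id}_{V^{\otimes n-2}})\circ(\mathrm{id}_V\otimes d_j^{n-1})\circ d_i^n,\qquad i\le j,\]
in which every term is a composite of copies of $R$; in the $c_\ell$-notation this is $(c_2\cdots c_i)(c_1\cdots c_j)=c_1(c_2\cdots c_j)(c_1\cdots c_{i-1})$. I would prove it by induction on $i$, the base case $i=1$ being an identity of empty products. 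The inductive engine is the commutation lemma
\[c_i\circ(c_1 c_2\cdots c_j)=(c_1 c_2\cdots c_j)\circ c_{i-1}\qquad(2\le i\le j),\]
obtained by commuting $c_i$ past $c_1,\dots,c_{i-2}$, applying the single Yang-Baxter move $c_i c_{i-1}c_i=c_{i-1}c_i c_{i-1}$, and then commuting the freed $c_{i-1}$ past $c_{i+1},\dots,c_j$. Writing $(c_2\cdots c_i)=(c_2\cdots c_{i-1})c_i$ and applying this lemma turns the left side into $(c_2\cdots c_{i-1})(c_1\cdots c_j)\,c_{i-1}$, to which the induction hypothesis applies, producing the right side. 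This is the only place the Yang-Baxter equation enters, and the main obstacle is exactly to spot the correct positive reformulation together with this commutation lemma; once they are in hand the induction is routine.

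Finally (4) is a formal consequence of (3). Expanding $(\mathrm{id}_V\otimes\sigma_{n-1})\circ\sigma_n=\sum_{i,j}(-1)^{i+j}(\mathrm{id}_V\otimes d_j^{n-1})\circ d_i^n$ and splitting the sum according to $i\le j$ versus $i>j$, the first block is precisely $\psi_n$. For the second block I would reindex by $(i,j)\mapsto(j+1,i)$, which puts its pairs in bijection with those satisfying $i\le j$, and then apply the rearranged form of (3) to each term; the block becomes $-\psi_n+(\sigma_2\otimes\mathrm{id}_{V^{\otimes n-2}})\circ\psi_n$. Adding the two blocks cancels $\pm\psi_n$ and leaves $(\sigma_2\otimes\mathrm{id}_{V^{\otimes n-2}})\circ\psi_n$, as required.
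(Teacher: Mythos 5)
Your proposal is correct and follows essentially the same route as the paper: parts (1) and (2) are the bookkeeping the paper dismisses as ``directly from the definition,'' and your regrouping of the double sum in (4) via the substitution $(i,j)\mapsto(j+1,i)$ is exactly the paper's computation. The only difference is presentational: where the paper proves (3) by a one-line appeal to Reidemeister move III on the diagram for $d_k^n$, you give the equivalent algebraic argument --- the positive reformulation $(c_2\cdots c_i)(c_1\cdots c_j)=c_1(c_2\cdots c_j)(c_1\cdots c_{i-1})$ proved by induction from the braid relations --- which is a rigorous rendering of the same diagrammatic isotopy.
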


\begin{proof}
\begin{enumerate}
    \item[(1)(2)] Follows directly from the definition of $\sigma_{n}$
    \item[(3)] This is true since $R$ is an Yang-Baxter operator and the Reidemeister move III can be applied on the diagram representing $d_{k}^{n}$ as in Figure \ref{sig}. 
        \item [(4)] By the definition of $ \sigma_{n}$ and (3), we have
    $$\begin{aligned}
            (\mathrm{id}_{V}\otimes \sigma_{n-1})\circ \sigma_{n}&= \sum_{j=1}^{n-1} \sum_{i=1}^{n} (-1)^{i+j}(\mathrm{id}_{V}\otimes d_{j}^{n-1})\circ d_{i}^{n}\\ &=\sum_{i\le j}(-1)^{i+j} ((\mathrm{id}_{V}\otimes d_{j}^{n-1})\circ d_{i}^{n}-(\mathrm{id}_{V}\otimes d_{i}^{n-1})\circ d_{j+1}^{n})\\ &=(\sigma_{2}\otimes \mathrm{id}_{V^{\otimes n-2}})\circ \sum_{i\le j}(-1)^{i+j} (\mathrm{id}_{V}\otimes d_{j}^{n-1})\circ d_{i}^{n}.
    \end{aligned}$$
\end{enumerate}
\end{proof}

\begin{remark}
    Now we fix our Yang-Baxter operator as $R_{m}$. For the purpose to use less subscriptions, we abbreviate $V_{m}$ as $V$. We also write $F$ for $F(V_{m})$ which is actually polynomial algebra in $m$ variables over $\mathbb{K}$.
\end{remark}
For later convenience, we introduce the following notation.

\begin{definition}
    Consider the finite dimensional vector space $V$ with basis $\{v_1,...,v_m\}$.For each $n$, the $n$-bracket on $V^{\otimes n}$ is a linear map defined on standard tensor basis, $$\begin{aligned}
    [\ ,\,]_{n}: V^{\otimes n} \longrightarrow &V^{\otimes n} \\
    v_{i_{1}}\otimes \cdots \otimes v_{i_{n}} \mapsto [v_{i_{1}},\cdots ,v_{i_{n}}]_{n}&:=\sum_{\tau \in S_{n}} (-1)^{{\rm sgn} (\tau)}v_{i_{\tau_{1}}}\otimes v_{_{\tau_{2}}}\otimes \cdots \otimes v_{i_{\tau_{n}}}.\end{aligned}
    $$
    We denote $[V]_{n}$ to be the image of $V^{\otimes n}$ under $[\ ,\, ]_{n}$. It is clear that $[V]_{n}=\{0\}$ when $n>\dim(V)$. 
     
\end{definition}

\begin{remark}

From now on, when $n$ is clear in the context, we will use the notation $ [v_{i_{1}},\cdots ,v_{i_{n}}]$ instead of $ [v_{i_{1}},\cdots ,v_{i_{n}}]_{n}$.
    
\end{remark}
    
\begin{lemma}\label{prop4.4}\cite{AI}
    The $n$-bracket $[\ ,\,]_{n}$ is antisymmetric and 
    \[
\begin{aligned}
    [v_{i_{1}},\cdots ,v_{i_{n}}]&=\sum_{s=1}^{n}(-1)^{s+1}v_{i_{s}}\otimes[v_{i_{1}},\cdots, \hat{v}_{i_{s}},\cdots ,v_{i_{n}}]
      \\ &=\sum_{1\le s<t\le n}(-1)^{s+t+1}[v_{i_{s}},v_{i_{t}}]\otimes[v_{i_{1}},\cdots,\hat{v}_{i_{s}},\cdots,\hat{v}_{i_{t}},\cdots,v_{i_{n}}]
\end{aligned}
    \]
    where $\hat{v}_{i_{s}}$ indicates that $v_{i_{s}}$ is deleted.
\end{lemma}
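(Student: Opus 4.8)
The plan is to regard the $n$-bracket as the alternation (total antisymmetrization) operator on $V^{\otimes n}$ and to obtain all three assertions by reindexing the defining sum over $S_{n}$; the content is entirely a matter of bookkeeping signatures, with no input needed beyond the definition and the multiplicativity of $(-1)^{{\rm sgn}(\tau)}$.

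For antisymmetry, I would fix two argument slots $a<b$ and let $\theta=(a\,b)\in S_{n}$. Swapping $v_{i_{a}}$ and $v_{i_{b}}$ replaces each summand $v_{i_{\tau(1)}}\otimes\cdots\otimes v_{i_{\tau(n)}}$ by the summand indexed by $\theta\circ\tau$, and since $(-1)^{{\rm sgn}(\theta\circ\tau)}=-(-1)^{{\rm sgn}(\tau)}$ while $\tau\mapsto\theta\circ\tau$ is a bijection of $S_{n}$, the whole sum changes sign. This gives antisymmetry in every pair of arguments.

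For the first (cofactor-type) expansion, I would partition $S_{n}$ according to the value $s=\tau(1)$ occupying the leading tensor factor. The permutations with $\tau(1)=s$ are in bijection with the permutations $\tau'$ of $\{1,\dots,n\}\setminus\{s\}$ filling positions $2,\dots,n$, and counting inversions shows exactly $s-1$ of them involve the first slot, so $(-1)^{{\rm sgn}(\tau)}=(-1)^{s-1}(-1)^{{\rm sgn}(\tau')}$. Since $(-1)^{s-1}=(-1)^{s+1}$ and the inner sum over $\tau'$ is by definition the $(n-1)$-bracket $[v_{i_{1}},\dots,\hat v_{i_{s}},\dots,v_{i_{n}}]$, the first identity drops out.

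For the second expansion I would simply iterate: apply the first identity once, then apply it again to each $(n-1)$-bracket to extract a second leading factor $v_{i_{t}}$. A pair $\{s,t\}$ with $s<t$ then receives two contributions, one opening with $v_{i_{s}}\otimes v_{i_{t}}$ and one opening with $v_{i_{t}}\otimes v_{i_{s}}$, and a short computation of their signs (accounting for the fact that deleting the first index shifts the position of the second within the shortened list) shows they carry the coefficients $(-1)^{s+t+1}$ and $(-1)^{s+t}$ respectively, so together they assemble into $(-1)^{s+t+1}[v_{i_{s}},v_{i_{t}}]\otimes[v_{i_{1}},\dots,\hat v_{i_{s}},\dots,\hat v_{i_{t}},\dots,v_{i_{n}}]$. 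Summing over $s<t$ yields the claim. The main obstacle is precisely this last sign reconciliation: one must keep straight the parity contributed by each removed index relative to the list from which it is removed, since the second extraction happens from an already-shortened list, and the two orderings $(s,t)$ and $(t,s)$ must be shown to recombine into the bracket $[v_{i_{s}},v_{i_{t}}]$ with a common coefficient rather than cancelling or producing a stray sign.
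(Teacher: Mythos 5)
Your proposal is correct, and for the second identity---the only part the paper actually proves---it takes essentially the same route as the paper: apply the first expansion twice and regroup the pair $\{s,t\}$, with your sign bookkeeping ($(-1)^{s+t+1}$ and $(-1)^{s+t}$ for the two orderings, recombining into $(-1)^{s+t+1}[v_{i_s},v_{i_t}]\otimes[\cdots]$) matching the paper's displayed computation. The only difference is that you prove the antisymmetry and the first (cofactor-type) expansion directly by reindexing over $S_n$, whereas the paper simply cites them from \cite{AI}; your versions of these steps are also correct, so your argument is a self-contained superset of the paper's.
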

\begin{proof}
    The first equality and the fact that the bracket is antisymmetric are from \cite{AI}. The second equality follows from a directly computation by applying the first equality twice and regrouping terms: 
    \[
    \begin{aligned}
    [v_{i_{1}},\cdots ,v_{i_{n}}]&=\sum_{s=1}^{n}(-1)^{s+1}v_{i_{s}}\otimes[v_{i_{1}},\cdots, \hat{v}_{i_{s}},\cdots ,v_{i_{n}}]
    \\&=\sum_{s=1}^{n}\sum_{s<t}^{n}(-1)^{s+t+1}(v_{i_{s}}\otimes v_{i_{t}}-v_{i_{t}}\otimes    v_{i_{s}})\otimes[v_{i_{1}},\cdots,\hat{v}_{i_{s}},\cdots,\hat{v}_{i_{t}},\cdots,v_{i_{n}}]
      \\ &=\sum_{1\le s<t\le n}(-1)^{s+t+1}[v_{i_{s}},v_{i_{t}}]\otimes[v_{i_{1}},\cdots,\hat{v}_{i_{s}},\cdots,\hat{v}_{i_{t}},\cdots,v_{i_{n}}]
\end{aligned}
    \]
\end{proof}
We provide here our main theorem and delay its proof in next subsection.

\begin{theorem}\label{main}
   There is an eigenspace  decomposition from $\sigma_{n}$\rm{:}
    \[
    V^{\otimes n}=\ker \sigma_{n}\oplus (V\otimes \ker\sigma_{n-1})\oplus ( [V]_{2}\otimes \ker\sigma_{n-2})\oplus \cdots \oplus [V]_{n}.
    \]
    where $\ker\sigma_{n}$ is the eigenspace of  $0$ and $[V]_{k}\otimes \ker\sigma_{n-k}$ is the eigenspace of the eigenvalue quantum integer $[k]_{y^{2}}$, which is $ 1+y^{2}+\cdots+y^{2k-2}$  .
\end{theorem}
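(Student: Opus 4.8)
The plan is to reduce the whole statement to a single computation of how $\sigma_k$ acts on the totally antisymmetric part $[V]_k$, and then bootstrap to $V^{\otimes n}$ using Lemma~\ref{sigL}. The \emph{key sub-lemma} is that $\sigma_k$ acts on $[V]_k$ by the scalar $[k]_{y^2}=1+y^2+\cdots+y^{2k-2}$. Granting this, Lemma~\ref{sigL}(2) does the rest of the eigenvalue bookkeeping: it says $V^{\otimes k}\otimes\ker\sigma_{n-k}$ is $\sigma_n$-invariant with $\sigma_n$ restricting there to $\sigma_k\otimes\mathrm{id}$. Restricting further to $[V]_k\otimes\ker\sigma_{n-k}\subseteq V^{\otimes k}\otimes\ker\sigma_{n-k}$, the operator becomes $(\sigma_k|_{[V]_k})\otimes\mathrm{id}=[k]_{y^2}\,\mathrm{id}$. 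Thus each $[V]_k\otimes\ker\sigma_{n-k}$ lies in the $[k]_{y^2}$-eigenspace of $\sigma_n$ (with the conventions $[V]_0=\mathbb{K}$ and $\ker\sigma_0=\mathbb{K}$, so the extreme terms are $\ker\sigma_n$ for eigenvalue $0$ and $[V]_n$ for eigenvalue $[n]_{y^2}$). Since the quantum integers $[0]_{y^2},[1]_{y^2},\dots,[n]_{y^2}$ are pairwise distinct in $\mathbb{K}=\mathbb{C}(y)$, eigenvectors for different ones are independent, so the sum $\sum_{k=0}^n[V]_k\otimes\ker\sigma_{n-k}$ is automatically direct.

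To prove the sub-lemma I would induct on $k$. The base cases $k=1$ ($\sigma_1=\mathrm{id}$, eigenvalue $[1]_{y^2}=1$) and $k=2$ are immediate; for $k=2$ a direct check on $R_m$ gives $\sigma_2[v_i,v_j]=[2]_{y^2}[v_i,v_j]$, equivalently $R|_{[V]_2}=-y^2\,\mathrm{id}$. For the step, specialize Lemma~\ref{sigL}(1) to the case of splitting off the first strand, $\sigma_k=\mathrm{id}_{V^{\otimes k}}-d_2^k\circ(\mathrm{id}_V\otimes\sigma_{k-1})$, where $d_2^k=R\otimes\mathrm{id}_{V^{\otimes k-2}}$. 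By the first identity of Lemma~\ref{prop4.4} one has $[V]_k\subseteq V\otimes[V]_{k-1}$, so the inductive hypothesis gives $(\mathrm{id}_V\otimes\sigma_{k-1})|_{[V]_k}=[k-1]_{y^2}\,\mathrm{id}$; by the second identity one has $[V]_k\subseteq[V]_2\otimes V^{\otimes k-2}$, so $d_2^k|_{[V]_k}=(R|_{[V]_2})\otimes\mathrm{id}=-y^2\,\mathrm{id}$. Combining, $\sigma_k|_{[V]_k}=(1+y^2[k-1]_{y^2})\,\mathrm{id}=[k]_{y^2}\,\mathrm{id}$, which closes the induction.

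It then remains to prove \emph{exhaustiveness}, that the direct sum fills $V^{\otimes n}$, and this is where I expect the real work to be. I would induct on $n$, writing $V^{\otimes n}=V\otimes V^{\otimes n-1}$ and decomposing $V^{\otimes n-1}$ by the inductive hypothesis. Choosing a Pieri-type splitting $V\otimes[V]_k=[V]_{k+1}\oplus C_k$ (with $[V]_{k+1}\subseteq V\otimes[V]_k$ furnished by Lemma~\ref{prop4.4}) and reindexing $k\mapsto k+1$ recovers all summands $[V]_j\otimes\ker\sigma_{n-j}$ with $j\ge 1$ exactly, reducing the claim to the single statement that $\ker\sigma_n$ is a complement to $\bigoplus_{j\ge 1}[V]_j\otimes\ker\sigma_{n-j}$. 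Equivalently one must show $V^{\otimes n}=\ker\sigma_n\oplus\operatorname{im}\sigma_n$ (semisimplicity of $\sigma_n$ at the eigenvalue $0$) together with $\operatorname{im}\sigma_n=\bigoplus_{j\ge 1}[V]_j\otimes\ker\sigma_{n-j}$. The inclusion $\supseteq$ is free, since $\sigma_n$ is invertible (multiplication by the nonzero scalar $[j]_{y^2}$) on each summand with $j\ge 1$.

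The main obstacle is the reverse inclusion $\operatorname{im}\sigma_n\subseteq\bigoplus_{j\ge 1}[V]_j\otimes\ker\sigma_{n-j}$, i.e. controlling $\ker\sigma_n$ tightly enough to see it exactly fills the leftover. Here I would exploit Lemma~\ref{sigL}(4): since $\operatorname{im}\sigma_2=[V]_2$, the identity $(\mathrm{id}_V\otimes\sigma_{n-1})\circ\sigma_n=(\sigma_2\otimes\mathrm{id})\circ\psi_n$ forces $(\mathrm{id}_V\otimes\sigma_{n-1})(\operatorname{im}\sigma_n)\subseteq[V]_2\otimes V^{\otimes n-2}$, which, read against the eigenspace decomposition of $\tau_n:=\mathrm{id}_V\otimes\sigma_{n-1}$ supplied by the inductive hypothesis, constrains the components of $\operatorname{im}\sigma_n$. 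If this operator-theoretic route proves awkward, I would instead close the induction by a dimension count, verifying $\sum_{k=0}^n\binom{m}{k}\dim\ker\sigma_{n-k}=m^n$, equivalently that the Hilbert series of $\bigoplus_n\ker\sigma_n$ equals $1/((1-mq)(1+q)^m)$; combined with the directness already established, this forces the sum to be all of $V^{\otimes n}$.
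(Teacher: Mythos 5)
Your eigenvalue computation and the directness argument are sound. The induction $\sigma_k=\mathrm{id}-d_2^k\circ(\mathrm{id}_V\otimes\sigma_{k-1})$, combined with the inclusions $[V]_k\subseteq V\otimes[V]_{k-1}$ and $[V]_k\subseteq[V]_2\otimes V^{\otimes k-2}$ from Lemma \ref{prop4.4}, correctly yields $\sigma_k|_{[V]_k}=[k]_{y^2}\,\mathrm{id}$; this is in fact slightly more economical than the paper's Lemma \ref{lemma of eigen}, which invokes the intersection characterization $[V]_n=[V]_2\otimes V^{\otimes n-2}\cap\cdots\cap V^{\otimes n-2}\otimes[V]_2$ from Lemma \ref{main lemma}(1) to see that each $d_k^n$ acts on $[V]_n$ by $(-1)^{k-1}y^{2k-2}$, even though only the easy inclusions are needed for that step. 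Distinctness of the quantum integers over $\mathbb{C}(y)$ then gives directness exactly as in the paper.

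The genuine gap is exhaustiveness, which you flag but do not close, and neither of your two proposed routes works as stated. The dimension-count fallback is circular: the identity $\sum_{k}\binom{m}{k}\dim\ker\sigma_{n-k}=m^n$ is the paper's Lemma \ref{FomulaLemma}(1), which is \emph{derived from} Theorem \ref{main}, and you offer no independent computation of $\dim\ker\sigma_n$ --- given the directness you already have, verifying that identity is equivalent to proving the theorem. The operator route via Lemma \ref{sigL}(4) is the correct first step, but the constraint $(\mathrm{id}_V\otimes\sigma_{n-1})(\mathrm{im}\,\sigma_n)\subseteq[V]_2\otimes V^{\otimes n-2}$ by itself does not pin down $\mathrm{im}\,\sigma_n$; carried out, it becomes the paper's analysis of the tower $\phi_n^i=(\mathrm{id}_{V^{\otimes n-i}}\otimes\sigma_i)\circ\cdots\circ\sigma_n$, which requires two substantive inputs absent from your sketch. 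First, iterating Lemma \ref{sigL}(4) only places $\mathrm{im}\,\phi_n^i$ inside an intersection of spaces $V^{\otimes k-1}\otimes[V]_2\otimes V^{\otimes n-k-1}$, and identifying that intersection with $[V]_{n-i+1}\otimes V^{\otimes i-1}$ is precisely the hard direction of Lemma \ref{main lemma}(1), which the paper proves via exactness of the Koszul-type complex $(F\otimes[V]_k,\delta_k)$ in Lemma \ref{Lemma 3.12}, i.e.\ the vanishing of $\mathrm{Tor}_k^F(F,\mathbb{K})$. Second, one needs the matching lower bound: that $\phi_n^i$ restricted to $V^{\otimes n+1-i}\otimes\ker\sigma_{i-1}$ agrees with $\phi_{n+1-i}^1\otimes\mathrm{id}$, and that $\phi_n^1$ surjects onto $[V]_n$, which the paper extracts from the explicit formula $\phi_n^1(v_{i_1}\otimes\cdots\otimes v_{i_n})=y^{2\,\mathrm{inv}(i_n,\ldots,i_1)}[v_{i_1},\cdots,v_{i_n}]$ of Lemma \ref{main lemma}(2)(b). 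Only with both inclusions, giving $\mathrm{im}\,\phi_n^i\cap\ker(\mathrm{id}_{V^{\otimes n+1-i}}\otimes\sigma_{i-1})=[V]_{n+1-i}\otimes\ker\sigma_{i-1}$, does the rank--nullity bookkeeping along the tower produce $\dim V^{\otimes n}=\sum_{k}\dim([V]_k\otimes\ker\sigma_{n-k})$ and complete the proof; without them your induction on $n$ does not close.
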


This decomposition leads to a direct sum decomposition of the one-term Yang-Baxter chain complex, which proves Part $(1)$ of Theorem \ref{thm main}.

\begin{theorem}\label{complex decom} The Yang-Baxter chain complex with coefficients in the $V$-module $M$ has a direct sum decomposition into a tensor product $C^f(M) \otimes B(V_m)$, 
where $C^f(M)$ is the subcomplex $(M\otimes [V]_{n},\bar{\partial}^{YB}_{n})$ of $(C_{n}^{YB},\partial^{YB}_{n})$, where $\bar{\partial}^{YB}_{n}$ is the restriction of $\partial^{YB}_{n}$ on $M\otimes [V]_{n}$(for simplicity, we will not distinguish $\bar{\partial}^{YB}_{n}$ and $\partial^{YB}_{n}$ later on). $B(V_m)=\oplus_{n} \ker\sigma_{n}$ is a graded vector space only depending on $V_m$. Compare the diagram below:
 \[
\begin{tikzcd}[column sep=small, row sep=small]
\cdots \arrow[r] & 
M \otimes [V]_1 \otimes \ker \sigma_n \arrow[r, "\partial^{YB}_{n+1}"] \arrow[d, phantom, "\oplus" description] & 
M \otimes \ker \sigma_n \arrow[r, "\partial^{YB}_n"] \arrow[d, phantom, "\oplus" description] &
0 \arrow[d, phantom] \\
\cdots \arrow[r] & 
M \otimes [V]_2 \otimes \ker \sigma_{n-1} \arrow[r, "\partial^{YB}_{n+1}"] \arrow[d, phantom, "\oplus" description] & 
M \otimes [V]_1 \otimes \ker \sigma_{n-1} \arrow[r, "\partial^{YB}_n"] \arrow[d, phantom, "\oplus" description] & 
M \otimes \ker \sigma_{n-1} \arrow[r, "\partial^{YB}_{n-1}"] \arrow[d, phantom, "\oplus" description] & 
0 \\
& \vdots \arrow[d, phantom, "\oplus" description] & 
\vdots \arrow[d, phantom, "\oplus" description] & 
\vdots \arrow[d, phantom, "\oplus" description] \\
\cdots \arrow[r] &
M \otimes [V]_{n+1} \arrow[r, "\partial^{YB}_{n+1}"] &
M \otimes [V]_n \arrow[r, "\partial^{YB}_n"] &
M \otimes [V]_{n-1} \arrow[r, "\partial^{YB}_{n-1}"] &
\cdots
\end{tikzcd}
\]

\end{theorem}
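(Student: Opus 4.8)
The plan is to derive the decomposition directly from the eigenspace decomposition of Theorem \ref{main}, tensored on the left by $M$. First I would record
\[
M \otimes V^{\otimes n} = \bigoplus_{k=0}^{n} M \otimes [V]_{k} \otimes \ker \sigma_{n-k},
\]
with the conventions $[V]_{0}=\mathbb{K}$ and $\ker\sigma_{0}=\mathbb{K}$ (so the $k=0$ and $k=n$ ends of the sum are $M\otimes\ker\sigma_{n}$ and $M\otimes[V]_{n}$ respectively). The organizing idea is to regroup these summands not by homological degree $n$ but by the factor $\ker\sigma_{j}$ that occurs: for each fixed $j\ge 0$, collect the summands $M\otimes[V]_{k}\otimes\ker\sigma_{j}$ over all $n=k+j$. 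This is precisely the row-wise grouping displayed in the diagram of the statement.

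The core step is to show that the differential respects this grouping, that is,
\[
\partial^{YB}_{n}\bigl(M \otimes [V]_{k} \otimes \ker \sigma_{n-k}\bigr) \subseteq M \otimes [V]_{k-1} \otimes \ker \sigma_{n-k},
\]
so $\partial^{YB}_{n}$ lowers the bracket index by one while leaving the $\ker\sigma_{n-k}$ factor untouched. I would use the factorization $\partial^{YB}_{n}=(R_{M}\otimes\mathrm{id}_{V^{\otimes n-1}})\circ(\mathrm{id}_{M}\otimes\sigma_{n})$ from the remark after Definition \ref{sigD}. On the summand $M\otimes[V]_{k}\otimes\ker\sigma_{n-k}$, Lemma \ref{sigL}(2) gives $\sigma_{n}=\sigma_{k}\otimes\mathrm{id}_{V^{\otimes n-k}}$, and by Theorem \ref{main} applied with $n=k$ the map $\sigma_{k}$ acts on $[V]_{k}$ as the scalar $[k]_{y^{2}}$; hence $\mathrm{id}_{M}\otimes\sigma_{n}$ is simply multiplication by $[k]_{y^{2}}$ on this summand. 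It then remains to track $R_{M}\otimes\mathrm{id}$: extracting the first tensor factor from a bracket via Lemma \ref{prop4.4},
\[
[v_{i_{1}},\ldots,v_{i_{k}}]=\sum_{s=1}^{k}(-1)^{s+1}\,v_{i_{s}}\otimes[v_{i_{1}},\ldots,\hat{v}_{i_{s}},\ldots,v_{i_{k}}],
\]
shows that $R_{M}$ absorbs the extracted $v_{i_{s}}$ into $M$ and leaves a $(k-1)$-bracket, so $R_{M}\otimes\mathrm{id}$ maps $M\otimes[V]_{k}\otimes\ker\sigma_{n-k}$ into $M\otimes[V]_{k-1}\otimes\ker\sigma_{n-k}$, the $\ker\sigma_{n-k}$ factor being a spectator in the last $n-k$ slots. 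This establishes the inclusion and moreover identifies the restricted differential with $[k]_{y^{2}}$ times the $R_{M}$-expansion acting only on the $M\otimes[V]_{\bullet}$ part.

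Granting this, I would conclude that for each fixed $j$ the family $\{M\otimes[V]_{k}\otimes\ker\sigma_{j}\}_{k}$ is a subcomplex isomorphic to $C^{f}(M)\otimes\ker\sigma_{j}$ with differential $\bar{\partial}^{YB}\otimes\mathrm{id}_{\ker\sigma_{j}}$, since $\ker\sigma_{j}$ plays no role in the computation above. Summing over $j$ yields
\[
C^{YB}(M)=\bigoplus_{j\ge 0} C^{f}(M)\otimes\ker\sigma_{j}=C^{f}(M)\otimes B(V_{m}),
\]
and a bookkeeping check confirms the grading: the degree-$n$ part of the right-hand side is $\bigoplus_{k+j=n}(M\otimes[V]_{k})\otimes\ker\sigma_{j}$, which is $M\otimes V^{\otimes n}$ by Theorem \ref{main}. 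Since $[V]_{k}=\{0\}$ for $k>m=\dim V$, the factor $C^{f}(M)$ is automatically a finite complex of length $m$, as required for Part $(1)$ of Theorem \ref{thm main}. I expect the main obstacle to be the middle paragraph: verifying cleanly that $\partial^{YB}_{n}$ preserves the $\ker\sigma_{j}$ factor and lowers the bracket index by exactly one. The delicate points are the correct reduction of $\sigma_{n}$ to a scalar on each eigenspace summand via Lemma \ref{sigL}(2), and the compatibility of the first-factor bracket expansion in Lemma \ref{prop4.4} with $R_{M}$ acting on the first tensor slot; the remainder is direct-sum bookkeeping.
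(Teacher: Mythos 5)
Your proposal is correct and takes essentially the same route as the paper's proof: both factor $\partial^{YB}_{n}=(R_{M}\otimes \mathrm{id}_{V^{\otimes n-1}})\circ(\mathrm{id}_{M}\otimes\sigma_{n})$, use Theorem \ref{main} to reduce $\mathrm{id}_{M}\otimes\sigma_{n}$ to the scalar $[k]_{y^{2}}$ on each summand $M\otimes[V]_{k}\otimes\ker\sigma_{n-k}$, and invoke the bracket expansion of Lemma \ref{prop4.4} to see that $R_{M}\otimes\mathrm{id}$ lands in $M\otimes[V]_{k-1}\otimes\ker\sigma_{n-k}$. The only difference is that you spell out the row-wise regrouping by the $\ker\sigma_{j}$ factor and the identification with $C^{f}(M)\otimes B(V_{m})$, bookkeeping the paper leaves implicit.
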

\begin{proof}

We have$$
\begin{aligned}
    \partial^{YB}_{n}(M\otimes[V]_{k}\otimes\ker\sigma_{n-k})=&(R_{M}\otimes \mathrm{id}_{V^{\otimes n-1}})\circ (\mathrm{id}_{M}\otimes \sigma_{n})
(M\otimes[V]_{k}\otimes\ker\sigma_{n-k})
    \\= & (R_{M}\otimes \mathrm{id}_{V^{\otimes n-1}})
(M\otimes[V]_{k}\otimes\ker\sigma_{n-k})\subset M\otimes [V]_{k-1}\otimes\ker\sigma_{n-k}.
\end{aligned}$$
The second equality follows from Theorem \ref{main} and the last inclusion is given by Lemma \ref{prop4.4}.   
\end{proof}

The following proposition explains Part $(2)$ of Theorem \ref{thm main}.

\begin{theorem}
\label{Koszul finite}
The finite complex $C^f(M)$ is isomorphic to $(M\otimes_{F} \bigwedge^{n}F^{m}, {\rm id}_{M}\otimes d_{n})_{n=1}^{m}$, where $ (\bigwedge^{n}F^{m},d_{n})_{n=1}^{m}$ is the Koszul resolution of the one dimensional left  $F$-module $\mathbb{K}$.
\[
\begin{tikzcd}
    0\arrow[r]&M\otimes_{F}\bigwedge^{m}F^{m} \arrow[d,"f_{m}"]\arrow[r,"{\rm id}_{M}\otimes d_{m}"]& \cdots \arrow[r] & M\otimes_{F}\bigwedge^{2}F^{m} \arrow[d,"f_{2}"] \arrow[r,"\mathrm{id}_{M}\otimes \mathop{d}_{2}"] & M\otimes_{F} F^{m} \arrow[d,"f_{1}"] \arrow[r,"\mathrm{id}_{M}\otimes \mathop{d}_{1}"] & M\otimes_{F}F \arrow[d,"\simeq"]  \\
    0\arrow[r] & M\otimes_{\mathbb{K}}[V]_{m}\arrow[r,"\partial^{YB}_{m}"] & \cdots \arrow[r]&M\otimes_{\mathbb{K}}[V]_{2} \arrow[r,"\partial^{YB}_{2}"] &M \otimes_{\mathbb{K}}[V]_{1}\arrow[r,"\partial^{YB}_{1}"] & M 
\end{tikzcd}
\]
The chain isomorphism $f_{k}$ is defined as following:
\[
\begin{aligned}
    f_{k}:M\otimes_{F}\bigwedge^{k}F^{m} &\to M\otimes_{\mathbb{K}}[V]_{k}\\ m\otimes( e_{i_{1}}\wedge\cdots \wedge e_{i_{k}}) &\mapsto \frac{1}{[k]_{y^2}!}m\otimes[v_{i_{1}},\cdots,v_{i_{k}}].
\end{aligned}
\] 
\end{theorem}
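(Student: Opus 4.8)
The plan is to show that the maps $f_k$ are simultaneously $\mathbb{K}$-linear isomorphisms and a chain map, which together exhibit the asserted isomorphism of complexes. I would begin with two bookkeeping identifications. Since $F=\mathbb{K}[v_1,\dots,v_m]$ is the polynomial algebra, the free exterior powers split as $\bigwedge^k F^m\cong F\otimes_{\mathbb{K}}\bigwedge^k\mathbb{K}^m$ with $\mathbb{K}^m=\mathrm{span}_{\mathbb{K}}\{e_1,\dots,e_m\}$, so that $M\otimes_F\bigwedge^k F^m\cong M\otimes_{\mathbb{K}}\bigwedge^k\mathbb{K}^m$ and the Koszul differential takes the form
$$(\mathrm{id}_M\otimes d_k)(m\otimes e_{i_1}\wedge\cdots\wedge e_{i_k})=\sum_{s=1}^{k}(-1)^{s+1}(mv_{i_s})\otimes(e_{i_1}\wedge\cdots\wedge\hat{e}_{i_s}\wedge\cdots\wedge e_{i_k}),$$
where $mv_{i_s}$ denotes the $F$-action $R_M(m\otimes v_{i_s})$. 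Under this identification $f_k=\mathrm{id}_M\otimes g_k$ for the map $g_k\colon\bigwedge^k\mathbb{K}^m\to[V]_k$, $e_{i_1}\wedge\cdots\wedge e_{i_k}\mapsto\tfrac{1}{[k]_{y^2}!}[v_{i_1},\dots,v_{i_k}]$, and this reduces the whole question of well-definedness and bijectivity to the finite-dimensional space $\bigwedge^k\mathbb{K}^m$, cleanly handling the $\otimes_F$ versus $\otimes_{\mathbb{K}}$ distinction.

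To see each $f_k$ is an isomorphism I would argue at the level of $g_k$. Well-definedness is immediate because the $n$-bracket is multilinear and antisymmetric by Lemma \ref{prop4.4}, matching the defining relations of $\bigwedge^k\mathbb{K}^m$. For bijectivity, $g_k$ sends the basis $\{e_{i_1}\wedge\cdots\wedge e_{i_k}:i_1<\cdots<i_k\}$ to $\{\tfrac{1}{[k]_{y^2}!}[v_{i_1},\dots,v_{i_k}]:i_1<\cdots<i_k\}$; the brackets with strictly increasing indices are supported on disjoint sets of standard tensor monomials, hence linearly independent, and they span $[V]_k$, while the scalar $\tfrac{1}{[k]_{y^2}!}$ is nonzero in $\mathbb{C}(y)$ since each $[j]_{y^2}=1+y^2+\cdots+y^{2j-2}$ is a nonzero element of $\mathbb{C}(y)$. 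Thus $g_k$ carries a basis to a basis and is an isomorphism.

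The substantive step is computing $\partial^{YB}_n$ on the subcomplex $M\otimes[V]_n$. By Theorem \ref{main} with $k=n$ (using $\ker\sigma_0=\mathbb{K}$), the space $[V]_n$ is precisely the eigenspace of $\sigma_n$ with eigenvalue $[n]_{y^2}$, so $\sigma_n|_{[V]_n}=[n]_{y^2}\cdot\mathrm{id}$. Combining this with $\partial^{YB}_n=(R_M\otimes\mathrm{id}_{V^{\otimes n-1}})\circ(\mathrm{id}_M\otimes\sigma_n)$ and the first expansion in Lemma \ref{prop4.4}, I expect to obtain
$$\partial^{YB}_n(m\otimes[v_{i_1},\dots,v_{i_n}])=[n]_{y^2}\sum_{s=1}^{n}(-1)^{s+1}(mv_{i_s})\otimes[v_{i_1},\dots,\hat{v}_{i_s},\dots,v_{i_n}],$$
with the image lying in $M\otimes[V]_{n-1}$ as guaranteed by Theorem \ref{complex decom}, so $C^f(M)$ is a genuine subcomplex. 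This is the only place where real content enters: Theorem \ref{main} turns the face map into a scalar multiple of the Koszul differential.

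Finally I would verify $\partial^{YB}_k\circ f_k=f_{k-1}\circ(\mathrm{id}_M\otimes d_k)$ by direct substitution. The left side yields $\tfrac{[k]_{y^2}}{[k]_{y^2}!}\sum_s(-1)^{s+1}(mv_{i_s})\otimes[v_{i_1},\dots,\hat{v}_{i_s},\dots,v_{i_k}]$ and the right side yields $\tfrac{1}{[k-1]_{y^2}!}\sum_s(-1)^{s+1}(mv_{i_s})\otimes[v_{i_1},\dots,\hat{v}_{i_s},\dots,v_{i_k}]$; these coincide because $[k]_{y^2}!/[k]_{y^2}=[k-1]_{y^2}!$, which is exactly why the normalization $\tfrac{1}{[k]_{y^2}!}$ was chosen. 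Hence the diagram commutes and the $f_k$ assemble into an isomorphism of complexes. I expect no genuine obstacle beyond this scalar matching; the only points demanding care are the nonvanishing of the quantum factorials and the agreement of signs between the two differentials, both of which are settled above.
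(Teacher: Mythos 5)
Your proof is correct and takes essentially the same approach as the paper's: the chain-map property is checked on basis elements by using Theorem \ref{main} to replace $\sigma_{k}$ on $[V]_{k}$ by the scalar $[k]_{y^{2}}$, applying the expansion of Lemma \ref{prop4.4}, and matching normalizations via $[k]_{y^{2}}!/[k]_{y^{2}}=[k-1]_{y^{2}}!$. The only difference is that you spell out what the paper dismisses as ``clearly'' an isomorphism --- the identification $M\otimes_{F}\bigwedge^{k}F^{m}\cong M\otimes_{\mathbb{K}}\bigwedge^{k}\mathbb{K}^{m}$ and the disjoint-support basis argument with nonvanishing quantum factorials --- which is a welcome but inessential elaboration.
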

\begin{proof}
    $f_{k}$ is a linear isomorphism clearly, so we only need to show it is a chain map.
    \[
    \begin{tikzcd}
        m\otimes (e_{i_{1}}\wedge\cdots \wedge e_{i_{k}}) \arrow[r,shorten=2mm,mapsto,"\mathrm{id}_{M}\otimes d_{k}"] \arrow[d,mapsto,"f_{k}"] & \sum_{j=1}^{k}(-1)^{j+1}{mv_{i_{j}}}\otimes(e_{i_{1}}\wedge\cdots\wedge\hat{e}_{i_{j}}\wedge\cdots\wedge e_{i_{k}}) \arrow[d,mapsto,"f_{k-1}"] \\ \frac{1}{[k]_{y^2}!}m\otimes[v_{i_{1}},\cdots,v_{i_{k}}] \arrow[r,mapsto,"\partial^{YB}_{k}"] & \sum_{j=1}^{k}(-1)^{j+1}\frac{1}{[k-1]_{y^2}!}mv_{i_{j}}\otimes [v_{i_{1}},\cdots,\hat{v}_{i_{j}},\cdots v_{i_{k}}]
    \end{tikzcd}
    \]
    Note that by Theorem \ref{main}, $[v_{i_{1}},\cdots,v_{i_{k}}]$ is an eigenvector of $\sigma_{k}$ with the eigenvalue $[k]_{y^2}$ and $\partial_{k}^{YB}=(R_{M}\otimes {\rm id}_{V^{\otimes k}})\circ({\rm id}_{M}\otimes \sigma_{k})$, we have the commutative diagram above.
\end{proof}
As an application, we compute the one-term Yang-Baxter homology of with coefficients in $F$.  
\begin{example}
    The one-term Yang-Baxter homology with coefficient in the  $V_{m}$-module $F$ is $H^{YB}_{n}(F)=\{ 1\}\otimes \ker\sigma_{n}$.
\end{example}
\begin{proof}

We have $$H^{YB}_{n}(F)=\bigoplus_{k=0}^{n}\frac{\ker(\partial_{n}^{YB}\mid_{F\otimes[V]_{k}\otimes\ker\sigma_{n-k}})}{{\rm im}\,(\partial_{n+1}^{YB}\mid_{F\otimes[V]_{k+1}\otimes\ker\sigma_{n-k}})}$$
from Theorem \ref{complex decom}. When $k=0$, we have $$\frac{\ker(\partial_{n}^{YB}\mid_{F\otimes\ker\sigma_{n}})}{{\rm im}\,(\partial_{n+1}^{YB}\mid_{F\otimes[V]_{1}\otimes\ker\sigma_{n}})}=\frac{F\otimes\ker\sigma_{n}}{R_{F}(F\otimes V)\otimes\ker\sigma_{n}}=\{1\}\otimes \ker\sigma_{n}$$ by direct calculation. Let $M=F$ in Theorem \ref{Koszul finite}, the chain complex $(F\otimes_{F}\bigwedge^{k}F^{m},{\rm id}_{F}\otimes d_{k})$ is just Koszul resolution. Thus we know 
 \[
    \begin{tikzcd}
       F\otimes [V]_{k+1} \arrow[r,"\partial^{YB}_{k+1}"] & F\otimes [V]_{k}\ \arrow[r,"\partial^{YB}_{k}"] & F\otimes [V]_{k-1}
    \end{tikzcd}
    \]
is exact, so is 
 \[
    \begin{tikzcd}
       F\otimes [V]_{k+1}\otimes \ker\sigma_{n-k} \arrow[r,"\partial^{YB}_{n+1}"] & F\otimes [V]_{k}\otimes \ker\sigma_{n-k} \arrow[r,"\partial^{YB}_{n}"] & F\otimes [V]_{k-1}\otimes \ker\sigma_{n-k-1} .
    \end{tikzcd}
    \]
    Thus we have $$\frac{\ker(\partial_{n}^{YB}\mid_{F\otimes[V]_{k}\otimes\ker\sigma_{n-k}})}{{\rm im}\,(\partial_{n+1}^{YB}\mid_{F\otimes[V]_{k+1}\otimes\ker\sigma_{n-k}})}=\{0\}$$ when $k>0$.
\end{proof}

\subsection{The proof of Theorem \ref{main}}

In this subsection, we will establish the proof of Theorem \ref{main}. For that, we will first prove several lemmas.

\begin{lemma}\label{Lemma 3.12}
    $(F\otimes [V]_{k},\delta _{k}:=(R_{F}\otimes{\rm id}_{V^{\otimes k-1}})\mid_{F\otimes[V]_{k}})$ is a chain complex and is isomorphic to the chain complex $(F\otimes_{F}\bigwedge^{k}F^{m},{\rm id}_{F}\otimes d_{k})${\rm :}
    \[
    \begin{tikzcd}
    0\arrow[r]&F\otimes_{F}\bigwedge^{m}F^{m} \arrow[d,"\bar{f}_{m}"]\arrow[r,"{\rm id}_{F}\otimes d_{m}"]& \cdots \arrow[r] & F\otimes_{F}\bigwedge^{2}F^{m} \arrow[d,"\bar{f}_{2}"] \arrow[r,"\mathrm{id}_{F}\otimes \mathop{d}_{2}"] & F\otimes_{F} F^{m} \arrow[d,"\bar{f}_{1}"] \arrow[r,"\mathrm{id}_{F}\otimes \mathop{d}_{1}"] & F\otimes_{F}F \arrow[d,"\simeq"]  \\
    0\arrow[r] & F\otimes_{\mathbb{K}}[V]_{m}\arrow[r,"\delta_{m}"] & \cdots \arrow[r]&F\otimes_{\mathbb{K}}[V]_{2} \arrow[r,"\delta_{2}"] &F \otimes_{\mathbb{K}}[V]_{1}\arrow[r,"\delta_{1}"] & F 
\end{tikzcd}
    \]
    where $\bar{f}_{k}$ is defined on the basis by
    \[
    \bar{f}_{k}(x\otimes e_{i_{1}}\wedge \cdots\wedge e_{i_{k}}):=x\otimes[v_{i_{1}},\cdots,v_{i_k}].
    \]
    Therefore, both of them are exact for $k\ge 1$.
\end{lemma}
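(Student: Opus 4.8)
The plan is to prove the statement in three moves: exhibit each $\bar f_{k}$ as an isomorphism, verify that the family $(\bar f_{k})_{k}$ intertwines the two differentials, and then transport exactness from the Koszul side. The essential point that keeps the argument honest is that the differential $\delta_{k}=(R_{F}\otimes \mathrm{id}_{V^{\otimes k-1}})\mid_{F\otimes[V]_{k}}$ does \emph{not} involve $\sigma_{k}$; consequently the whole proof rests only on Lemma \ref{prop4.4} (the bracket identities of \cite{AI}) together with standard facts about Koszul complexes, and in particular it never invokes Theorem \ref{main}. This is exactly why Lemma \ref{Lemma 3.12} is allowed to feed into the proof of Theorem \ref{main}, and it is also why no normalization factor $\tfrac{1}{[k]_{y^{2}}!}$ is needed here, in contrast with the map $f_{k}$ of Theorem \ref{Koszul finite}.

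First I would identify both sides as free $F$-modules of the same rank. The space $\bigwedge^{k}F^{m}$ is free over $F$ with basis $\{e_{i_{1}}\wedge\cdots\wedge e_{i_{k}} : i_{1}<\cdots<i_{k}\}$, so $F\otimes_{F}\bigwedge^{k}F^{m}\cong\bigwedge^{k}F^{m}$ has rank $\binom{m}{k}$. On the target, the antisymmetry asserted in Lemma \ref{prop4.4} shows that $[V]_{k}$ is spanned by the ordered brackets $[v_{i_{1}},\ldots,v_{i_{k}}]$ with $i_{1}<\cdots<i_{k}$; these are $\mathbb{K}$-linearly independent, since the identity-permutation term $v_{i_{1}}\otimes\cdots\otimes v_{i_{k}}$ occurs with coefficient $1$ and distinct increasing tuples involve disjoint sets of standard tensors. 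Hence $\dim_{\mathbb{K}}[V]_{k}=\binom{m}{k}$, and $F\otimes_{\mathbb{K}}[V]_{k}$ is free over $F$ (acting on the left factor) with basis the ordered brackets. Since $\bar f_{k}$ is $F$-linear and sends the $e$-basis bijectively to the bracket-basis, it is an $F$-module isomorphism, a fortiori a $\mathbb{K}$-linear isomorphism.

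The key step is the chain-map identity $\delta_{k}\circ\bar f_{k}=\bar f_{k-1}\circ(\mathrm{id}_{F}\otimes d_{k})$. I would apply the first identity of Lemma \ref{prop4.4} to expand
\[
x\otimes[v_{i_{1}},\ldots,v_{i_{k}}]=\sum_{s=1}^{k}(-1)^{s+1}\,x\otimes v_{i_{s}}\otimes[v_{i_{1}},\ldots,\hat{v}_{i_{s}},\ldots,v_{i_{k}}]\ \in\ F\otimes V\otimes V^{\otimes k-1},
\]
and then apply $R_{F}\otimes\mathrm{id}_{V^{\otimes k-1}}$, which multiplies the leading factor $v_{i_{s}}$ into $F$, producing $\sum_{s}(-1)^{s+1}\,xv_{i_{s}}\otimes[v_{i_{1}},\ldots,\hat{v}_{i_{s}},\ldots,v_{i_{k}}]$. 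This is precisely $\bar f_{k-1}$ applied to the Koszul image $\sum_{s}(-1)^{s+1}\,xv_{i_{s}}\otimes(e_{i_{1}}\wedge\cdots\wedge\hat{e}_{i_{s}}\wedge\cdots\wedge e_{i_{k}})$, so the square commutes; note the sign $(-1)^{s+1}$ in Lemma \ref{prop4.4} matches the Koszul sign exactly. The same expansion shows that each summand lies in $F\otimes[V]_{k-1}$, so $\delta_{k}$ genuinely maps $F\otimes[V]_{k}$ into $F\otimes[V]_{k-1}$ and the restriction is well defined. Finally, because $d_{k-1}d_{k}=0$ and $\bar f$ is an isomorphism intertwining the differentials, one gets $\delta_{k-1}\delta_{k}=0$ for free, establishing that $(F\otimes[V]_{k},\delta_{k})$ is a chain complex.

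For exactness I would use that $v_{1},\ldots,v_{m}$ is a regular sequence in $F=\mathbb{K}[v_{1},\ldots,v_{m}]$, so its Koszul complex is a free resolution of $\mathbb{K}=F/(v_{1},\ldots,v_{m})$ and is therefore acyclic in every degree $k\ge 1$ (the only nonvanishing homology, at the $k=0$ spot $F\otimes_{F}F$, is $\mathbb{K}$). Transporting this along the isomorphism $\bar f$ yields exactness of $(F\otimes[V]_{k},\delta_{k})$ for $k\ge 1$. I do not expect a genuine obstacle: the proof is essentially bookkeeping once Lemma \ref{prop4.4} is in hand, and the exactness is outsourced to standard Koszul theory. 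The two points deserving explicit care are the count $\dim_{\mathbb{K}}[V]_{k}=\binom{m}{k}$ (needed so that $\bar f_{k}$ is an isomorphism rather than merely a chain map) and the verification that $\delta_{k}$ lands in the smaller space $F\otimes[V]_{k-1}$; both drop out of the single bracket expansion above.
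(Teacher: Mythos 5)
Your proposal is correct and follows essentially the same route as the paper: exhibit $\bar f_{k}$ as an isomorphism intertwining $\delta_{k}$ with the Koszul differential via the expansion $[v_{i_{1}},\ldots,v_{i_{k}}]=\sum_{s}(-1)^{s+1}v_{i_{s}}\otimes[v_{i_{1}},\ldots,\hat v_{i_{s}},\ldots,v_{i_{k}}]$ from Lemma \ref{prop4.4}, then import exactness from standard Koszul theory. The only (immaterial) differences are that the paper proves $\delta_{k-1}\circ\delta_{k}=0$ directly from $[V]_{k}\subset[V]_{2}\otimes V^{\otimes k-2}$ and commutativity of $F$ rather than transporting $d^{2}=0$, and phrases acyclicity as $\mathrm{Tor}_{k}^{F}(F,\mathbb{K})=0$ rather than via the regular sequence $v_{1},\ldots,v_{m}$; your extra details (the count $\dim_{\mathbb{K}}[V]_{k}=\binom{m}{k}$, the observation that Theorem \ref{main} is never used) are correct and fill in steps the paper leaves as ``clear.''
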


\begin{proof}
     For $k\ge 2$, we have $[V]_{k}\subset [V]_{2}\otimes V^{\otimes k-2}={\rm im}\,({\rm id }_{V^{\otimes 2}}-R)\otimes V^{\otimes k-2}$ by Lemma \ref{prop4.4}. Thus $$\delta_{k-1}\circ\delta_{k}=(R_{F}\otimes {\rm id}_{V^{\otimes k-2}})\circ (R_{F}\otimes {\rm id}_{V^{\otimes k-1}})([V]_{k})=0.$$ It is clear that $\bar{f}_{k}$ is an isomorphism and the diagram commutes.

     We note that $H_{k}(F\otimes_{F}\bigwedge^{*}F^{m},{\rm id}_{F}\otimes d_{*})$ is $Tor_{k}^{F}(F,\mathbb{K})$ which is trivial for $k\ge 1$. Thus, the lower chain complex is also exact for $k\ge 1$.
\end{proof}

\begin{lemma}\label{main lemma}
    The following assertions are true.
    \begin{enumerate}
        \item $[V]_{n}=V\otimes [V]_{n-1}\cap [V]_{2}\otimes V^{\otimes n-2}=[V]_{2}\otimes V^{\otimes n-2} \cap V\otimes [V]_{2}\otimes V^{\otimes n-3}\cap \cdots \cap V^{\otimes n-2}\otimes [V]_{2}$.
        \item Define  $\phi_{n}^{i}:V^{\otimes n}\to V^{\otimes n}$ by $$\phi_{n}^{i}=(\mathrm{id}_{V^{\otimes n-i}}\otimes \sigma_{i})\circ (\mathrm{id}_{V^{\otimes n-i-1}}\otimes \sigma_{i+1} )\circ \cdots \circ \sigma_{n}.$$
        We have\begin{enumerate}
            \item ${\rm im}(\phi_{n}^{i})\subset [V]_{n-i+1}\otimes V^{\otimes i-1}$.
         \item $\phi_{n}^{1}$ is a surjective linear map onto $[V]_{n}$ and \[
        \phi_{n}^{1}(v_{i_{1}},\cdots,v_{i_{n}})=y^{2\, {\rm inv}(i_{n},\cdots ,i_{1})}[v_{i_{1}},\cdots,v_{i_{n}}]
        ,\]
        where ${\rm inv}(i_{n},\cdots,i_{i})$ is the inversion number of the sequence.
        \end{enumerate}
    \end{enumerate}
\end{lemma}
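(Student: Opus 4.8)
The plan is to handle the three assertions in order, since Part (1) feeds Part (2)(a) and both feed Part (2)(b).

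\emph{Part (1).} Over the characteristic-zero field $\mathbb{K}$ the subspace $[V]_k$ is exactly the space of alternating tensors in $V^{\otimes k}$ (the image of the antisymmetrizer equals its fixed space), so $V^{\otimes j-1}\otimes[V]_2\otimes V^{\otimes n-j-1}$ is precisely the space of tensors antisymmetric in slots $j,j{+}1$, and $V\otimes[V]_{n-1}$ is the space of tensors antisymmetric in all of slots $2,\dots,n$. The inclusions $[V]_n\subseteq V\otimes[V]_{n-1}$ and $[V]_n\subseteq[V]_2\otimes V^{\otimes n-2}$ are read off directly from the two expansions in Lemma \ref{prop4.4}. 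For the reverse inclusion in the first equality, an element of the intersection is antisymmetric under $(1\,2)$ and under every transposition of $\{2,\dots,n\}$; since $(1\,2),(2\,3),\dots,(n{-}1\,n)$ generate $S_n$, it is antisymmetric under all of $S_n$ and hence lies in $[V]_n$. The chain of intersections in the second equality is the same statement applied to every adjacent pair at once.

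\emph{Part (2)(a).} I would induct on $r:=n-i+1$, the number of slots to be antisymmetrized; the base case $r=1$ (i.e. $\phi_n^n=\sigma_n$, with $[V]_1=V$) is trivial. For the step, write $\phi_n^i=(\mathrm{id}_{V^{\otimes n-i}}\otimes\sigma_i)\circ\phi_n^{i+1}$ and examine the two outermost blocks: they equal $\mathrm{id}_{V^{\otimes n-i-1}}\otimes\big((\mathrm{id}_V\otimes\sigma_i)\circ\sigma_{i+1}\big)$, and by Lemma \ref{sigL}(4) (applied on $V^{\otimes i+1}$) the inner factor is $(\sigma_2\otimes\mathrm{id}_{V^{\otimes i-1}})\circ\psi_{i+1}$. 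Since ${\rm im}\,\sigma_2=[V]_2$, this forces ${\rm im}\,\phi_n^i$ into $V^{\otimes n-i-1}\otimes[V]_2\otimes V^{\otimes i-1}$, i.e. antisymmetric in slots $(n-i,n-i+1)$. On the other hand the inductive hypothesis gives ${\rm im}\,\phi_n^{i+1}\subseteq[V]_{n-i}\otimes V^{\otimes i}$, and $\mathrm{id}_{V^{\otimes n-i}}\otimes\sigma_i$ preserves this subspace, so ${\rm im}\,\phi_n^i$ is also antisymmetric in slots $1,\dots,n-i$. Intersecting the two conditions and using the left-justified mirror of Part (1), $[V]_{r}=([V]_{r-1}\otimes V)\cap(V^{\otimes r-2}\otimes[V]_2)$, together with the exactness of $-\otimes V^{\otimes i-1}$, yields ${\rm im}\,\phi_n^i\subseteq[V]_{n-i+1}\otimes V^{\otimes i-1}$.

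\emph{Part (2)(b).} Taking $i=1$ in (a) gives ${\rm im}\,\phi_n^1\subseteq[V]_n$. Because each $R_m$ keeps $v_a\otimes v_b$ inside $\mathrm{span}\{v_a\otimes v_b,\,v_b\otimes v_a\}$, the maps $d_k^n$, $\sigma_n$, $\phi_n^1$ all respect the grading of $V^{\otimes n}$ by the multiset of letters; since $[V]_n=\bigoplus_{j_1<\dots<j_n}\mathbb{K}[v_{j_1},\dots,v_{j_n}]$, this already shows $\phi_n^1(v_{i_1}\otimes\cdots\otimes v_{i_n})=0$ when the $i_a$ repeat, and $\phi_n^1(v_{i_1}\otimes\cdots\otimes v_{i_n})=c\,[v_{i_1},\dots,v_{i_n}]$ for a scalar $c$ when they are distinct. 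To pin down $c$ I would read off the coefficient $D$ of the decreasing monomial $v_{k_1}\otimes\cdots\otimes v_{k_n}$ ($k_1>\dots>k_n$ the decreasing rearrangement), noting that this monomial occurs in $[v_{i_1},\dots,v_{i_n}]$ with coefficient $(-1)^{{\rm inv}(i_n,\dots,i_1)}$, so $c=(-1)^{{\rm inv}(i_n,\dots,i_1)}D$. I would then use the two recursions $\phi_n^1=(\mathrm{id}_V\otimes\phi_{n-1}^1)\circ\sigma_n$ (valid since $\sigma_1=\mathrm{id}$ and each remaining block $\mathrm{id}_{V^{\otimes n-j}}\otimes\sigma_j$, $2\le j\le n-1$, fixes the first slot) and the expansion $\sigma_n=\sum_k(-1)^{k-1}d_k^n$. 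The crucial observation is that the largest letter, say $v_{i_p}$, is delivered to the first tensor slot of $\sigma_n(v_{i_1}\otimes\cdots\otimes v_{i_n})$ only by the summand $(-1)^{p-1}d_p^n$ and only along its all-swap branch: $v_{i_p}$ moves left solely when it is itself the braided strand, and as a resident it never triggers the $(1-y^2)$ hand-off since no letter exceeds it. This contributes exactly $(-1)^{p-1}y^{2(p-1)}\,v_{i_p}\otimes(v_{i_1}\otimes\cdots\otimes\widehat{v_{i_p}}\otimes\cdots\otimes v_{i_n})$. Applying $\mathrm{id}_V\otimes\phi_{n-1}^1$, inserting the inductive value $(-y^2)^{{\rm inv}(\cdots)}$ of the decreasing coefficient for the shortened sequence, and using ${\rm inv}(i_n,\dots,i_1)=(p-1)+{\rm inv}$ of the reversed shortened sequence, the induction closes with $D=(-y^2)^{{\rm inv}(i_n,\dots,i_1)}$ and hence $c=y^{2\,{\rm inv}(i_n,\dots,i_1)}$. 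Surjectivity onto $[V]_n$ is then immediate, as $\phi_n^1$ sends the increasing basis tensors to nonzero multiples of the basis of $[V]_n$.

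\emph{Main obstacle.} The delicate point is this last geometric claim: that $v_{i_p}$ reaches the first slot only through the all-swap branch of the single term $d_p^n$. I would make it rigorous by an induction along the braiding in $d_k^n$, tracking at each crossing whether the ``baton'' stays with the moving strand (the pure-swap or $y^2$ branch) or is passed to the resident (the $(1-y^2)$ branch), and noting that $v_{i_p}$ never receives the baton and pays one factor $y^2$ per crossing while it carries it. Choosing to compute the coefficient of the \emph{decreasing} monomial is exactly what makes only this extremal branch contribute, which keeps the computation from proliferating over all branches of the $R_m$-recursion.
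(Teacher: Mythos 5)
Your proposal is correct, and it is worth recording where it coincides with and departs from the paper's argument. For Part (1) you take a genuinely different route: the paper derives both equalities from exactness of the Koszul resolution (Lemma \ref{Lemma 3.12}), computing $F[V]_{n}={\rm im}\,\delta_{n}=\ker\delta_{n-1}$ over $F=\mathbb{K}[v_{1},\dots,v_{m}]$ and then intersecting with $V^{\otimes n}$ viewed inside $F\otimes V^{\otimes n-1}$, whereas you identify $[V]_{k}$ with the full space of alternating tensors (valid since $\mathbb{K}=\mathbb{C}(y)$ has characteristic zero) and invoke the generation of $S_{n}$ by adjacent transpositions. Your argument is more elementary and self-contained; the paper's buys uniformity with machinery it needs anyway for Theorem \ref{Koszul finite}, and does not pass through the characteristic-zero identification. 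For Part (2)(a) you do essentially what the paper does --- Lemma \ref{sigL}(4) to produce a $[V]_{2}$ condition plus Part (1) to assemble them --- only organized as an induction peeling off the two outermost blocks rather than applying \ref{sigL}(4) to every consecutive pair simultaneously; note that your step quietly uses the left-justified mirror $[V]_{r}=([V]_{r-1}\otimes V)\cap(V^{\otimes r-2}\otimes[V]_{2})$, which is not the literal first equality of Part (1) but does follow from its second equality (or from your alternating-tensor characterization), so this is fine. For Part (2)(b) your skeleton matches the paper's (containment in $[V]_{n}$ forces a scalar multiple, then extract the scalar from one extremal coefficient), but you supply two points the paper leaves implicit: the content-grading observation justifying $\phi_{n}^{1}(v_{i_{1}}\otimes\cdots\otimes v_{i_{n}})=c\,[v_{i_{1}},\dots,v_{i_{n}}]$ for that particular bracket, and the actual coefficient computation, which the paper dismisses as ``direct'' for the reversed monomial $v_{i_{n}}\otimes\cdots\otimes v_{i_{1}}$ while you instead read off the decreasing monomial. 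Your baton-tracking claim is sound: for $k<p$ the map $d_{k}^{n}$ never touches slot $p$; for $k>p$ the maximal letter, sitting in the left slot of the active pair, is bumped one step right by the pure-swap rule and stranded; and within $d_{p}^{n}$ any $(1-y^{2})$ branch leaves it at a slot $\ge 2$, so only the all-swap branch of $d_{p}^{n}$ contributes $(-1)^{p-1}y^{2(p-1)}$ to the first-slot-maximal component. The recursion $\phi_{n}^{1}=(\mathrm{id}_{V}\otimes\phi_{n-1}^{1})\circ\sigma_{n}$ holds because every block $\mathrm{id}_{V^{\otimes n-j}}\otimes\sigma_{j}$ with $j\le n-1$ fixes the first slot, and your bookkeeping ${\rm inv}(i_{n},\dots,i_{1})=(p-1)+{\rm inv}$ of the reversed shortened sequence, together with the sign $(-1)^{{\rm inv}(i_{n},\dots,i_{1})}$ of the decreasing rearrangement inside the bracket, correctly yields $D=(-y^{2})^{{\rm inv}(i_{n},\dots,i_{1})}$ and hence $c=y^{2\,{\rm inv}(i_{n},\dots,i_{1})}$; surjectivity then follows since the brackets of strictly increasing sequences span $[V]_{n}$.
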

\begin{proof}
\begin{enumerate}
\item 
Here we denote $(R_{F}\otimes {\rm id}_{V^{\otimes n-1}})(F\otimes [V]_{n})$ by $F[V]_{n}$ for convenience.
By Lemma \ref{Lemma 3.12}, we have $\ker R_{F}=\ker\delta_{1}={\rm im}\,\delta_{2}=F[V]_{2}$ and $$F[V]_{n}={\rm im}\,\delta_{n}=\ker\delta_{n-1}=F\otimes [V]_{n-1}\cap F[V]_{2}\otimes V^{\otimes n-2}.$$ We can identify $V^{\otimes n}$ with $(R_{F}\otimes {\rm id}_{V^{\otimes n-1}})(\mathbb{K}\otimes V^{\otimes n})$ as a subspace of $F\otimes V^{\otimes n-1}$ and obtain the first equality\footnote{Here we identify both $V^{\otimes n}$ and its subspaces as subspaces of $F\otimes V^{\otimes n-1}$.}
\[
\begin{aligned}    
 \ [V]_{n}&=F[V]_{n}\cap V^{\otimes n}\\&=(F\otimes[V]_{n-1}\cap V^{\otimes n})\cap (F[V]_{2}\otimes V^{\otimes n-2}\cap V^{\otimes n})\\&=V\otimes [V]_{n-1}\cap [V]_{2}\otimes V^{\otimes n-2}.
\end{aligned}
\]
The second equality can obtain from the first one clearly.
\item \begin{enumerate}
    \item 
    Note that ${\rm im}\,\sigma_{2}={\rm im}\,({\rm id}_{V^{\otimes2}}-R)=[V]_{2}$. Applying Lemma \ref{sigL}$(4)$ to  $(\mathrm{id}_{V^{\otimes k}}\otimes \sigma_{n-k})\circ (\mathrm{id}_{V^{\otimes k-1}}\otimes \sigma_{n-k+1})$ in $\phi_{n}^{i}$, we obtain ${\rm im}\,\phi_{n}^{i}\subset V^{\otimes k-1}\otimes [V]_{2}\otimes V^{\otimes n-k-1}$. As $k$ taking values from $1$ to $n-i$, we have $$\qquad  \qquad  {\rm im}\,\phi_{n}^{i}\subset [V]_{2}\otimes V^{\otimes n-2} \cap \cdots \cap V^{\otimes n-1-i}\otimes [V]_{2}\otimes V^{\otimes i-1}=[V]_{n-i+1}\otimes V^{\otimes i-1}.$$ 
    \item
        Next we show the formula $$\phi_{n}^{1}(v_{i_{1}},\cdots,v_{i_{n}})=y^{2 \,{\rm inv}(i_{n},\cdots ,i_{1})}[v_{i_{1}},\cdots,v_{i_{n}}],$$ which implies $\phi_{n}^{1}$ is surjective. Since ${\rm im}\,\phi_{n}^{1}\subset [V]_{n}$, we know $$\phi_{n}^{1}(v_{i_{1}},\cdots,v_{i_{n}})=\alpha[v_{i_{1}},\cdots,v_{i_{n}}],$$ where $\alpha \in \mathbb{K}$. By direct computation, it is easy to see that the basis $v_{i_{n}}\otimes \cdots \otimes v_{i_{1}}$ in $\phi_{n}^{1}(v_{i_{1}},\cdots,v_{i_{n}})$ appears only once and whose coefficient is $y^{2 \,{\rm inv}(i_{n},\cdots ,i_{1})}$. Therefore, we have 
        $\alpha=y^{2 \,{\rm inv}(i_{n},\cdots ,i_{1})}$.   
\end{enumerate}
\end{enumerate}    
\end{proof}

\begin{lemma}\label{lemma of eigen} $[V]_{n}$ is a subspace of the eigenspace of $\sigma_{n}$ with the eigenvalue $[n]_{y^{2}}=1+y^{2}+\cdots+y^{2n-2}$.
\end{lemma}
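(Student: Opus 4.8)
The plan is to induct on $n$, using the recursion
$$\sigma_n = \sigma_{n-1}\otimes \mathrm{id}_V + (-1)^{n-1}d_n^n,$$
which is immediate from the definition of $\sigma_n$ once one observes $d_i^n = d_i^i\otimes \mathrm{id}_{V^{\otimes n-i}}$ (it is essentially the case $k=n-1$ of Lemma \ref{sigL}(1)). The base cases are direct: $\sigma_1=\mathrm{id}_V$ acts on $[V]_1=V$ as $[1]_{y^2}=1$, and since $\sigma_2 = \mathrm{id}_{V^{\otimes 2}}-R$ with $R([v_i,v_j])=-y^2[v_i,v_j]$ for $i<j$ (a one-line computation from Example \ref{Example}), $\sigma_2$ acts on $[V]_2$ as $1+y^2=[2]_{y^2}$.

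For the inductive step, assume $\sigma_{n-1}$ restricts to $[n-1]_{y^2}\,\mathrm{id}$ on $[V]_{n-1}$, and split the action of $\sigma_n$ on $[V]_n$ into the two pieces of the recursion. First, the antisymmetry of the bracket (Lemma \ref{prop4.4}) lets me expand along the last tensor slot, which yields $[V]_n\subset [V]_{n-1}\otimes V$; hence $\sigma_{n-1}\otimes \mathrm{id}_V$ acts on all of $[V]_{n-1}\otimes V$, and in particular on $[V]_n$, as the scalar $[n-1]_{y^2}$ by the inductive hypothesis.

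The second, and main, step is to show that $d_n^n$ acts on $[V]_n$ as the scalar $(-y^2)^{n-1}$. Recall that $d_n^n$ is the cascade $(R\otimes\mathrm{id}_{V^{\otimes n-2}})\circ(\mathrm{id}_V\otimes R\otimes\mathrm{id}_{V^{\otimes n-3}})\circ\cdots\circ(\mathrm{id}_{V^{\otimes n-2}}\otimes R)$ of copies of $R$ applied successively at the adjacent positions $(n-1,n),(n-2,n-1),\dots,(1,2)$. By Lemma \ref{main lemma}(1), $[V]_n$ is contained in $V^{\otimes k-1}\otimes [V]_2\otimes V^{\otimes n-k-1}$ for every $k$, so on any $x\in[V]_n$ the first-applied (innermost) factor $\mathrm{id}_{V^{\otimes n-2}}\otimes R$ acts as $\mathrm{id}\otimes(R|_{[V]_2})=-y^2$, returning $-y^2x$, which is again a scalar multiple of $x$ lying in $[V]_n$. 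Iterating, each of the $n-1$ factors of the cascade contributes a factor $-y^2$, so $d_n^n|_{[V]_n}=(-y^2)^{n-1}\mathrm{id}$. Combining the two steps,
$$\sigma_n|_{[V]_n} = [n-1]_{y^2} + (-1)^{n-1}(-y^2)^{n-1} = [n-1]_{y^2} + y^{2(n-1)} = [n]_{y^2},$$
which is the claim.

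The delicate point is the $d_n^n$ computation: it is crucial that after applying each $R$ in the cascade the output remains a scalar multiple of the original $x$, so that the next $R$ again meets a $[V]_2$ factor on which it acts as $-y^2$ and the scalar action can be iterated. This is exactly what the intersection description of $[V]_n$ in Lemma \ref{main lemma}(1) guarantees. Everything else (the two bracket expansions, the value $R|_{[V]_2}=-y^2$, and the recursion for $\sigma_n$) is routine.
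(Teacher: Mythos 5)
Your proof is correct and rests on exactly the same two facts as the paper's: the intersection description $[V]_{n}=\bigcap_{k}V^{\otimes k-1}\otimes[V]_{2}\otimes V^{\otimes n-k-1}$ from Lemma \ref{main lemma}(1) and the fact that $R$ acts on $[V]_{2}$ as $-y^{2}$; the paper simply applies your cascade argument to every $d^{n}_{k}$ at once, obtaining $d^{n}_{k}(v)=(-1)^{k-1}y^{2k-2}v$ for all $k$ and summing $\sigma_{n}(v)=\sum_{k=1}^{n}y^{2k-2}v=[n]_{y^{2}}v$ directly, with no induction. Your induction via $\sigma_{n}=\sigma_{n-1}\otimes\mathrm{id}_{V}+(-1)^{n-1}d^{n}_{n}$ is a valid but purely cosmetic repackaging of that same computation.
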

\begin{proof}
Note that $[V]_{2}$ is the eigenspace of $R$ with the eigenvalue $-y^2$. And we have $$[V]_{n}=[V]_{2}\otimes V^{\otimes n-2} \cap V\otimes [V]_{2}\otimes V^{\otimes n-3}\cap \cdots \cap V^{\otimes n-2}\otimes [V]_{2}.$$
Thus, $\forall k\in \{1,\cdots,n\}$ and $\forall v\in [V]_{n}$ $$ d^{n}_{k}(v)=(-1)^{k-1}y^{2k-2}v$$
    and $$\sigma_{n}(v):=\sum_{k=1}^{n}(-1)^{k-1}d^{n}_{k}(v)=(1+y^{2}+\cdots+y^{2n-2})v.$$
\end{proof}

\begin{corollary}\label{eigen-sig}

$[V]_{k}\otimes \ker \sigma_{n-k}$ is a subspace of the eigenspace of $\sigma_{n}$ with the eigenvalue $[k]_{y^{2}}=1+y^{2}+\cdots+y^{2k-2}$.
    
\end{corollary}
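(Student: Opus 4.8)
The plan is to deduce Corollary \ref{eigen-sig} directly from Lemma \ref{lemma of eigen} together with the structural identity in Lemma \ref{sigL}(2). The key observation is that Lemma \ref{sigL}(2) tells us how $\sigma_n$ acts on the subspace $V^{\otimes k}\otimes \ker\sigma_{n-k}$: namely, $\sigma_n$ restricted there equals $\sigma_k\otimes \mathrm{id}_{V^{\otimes n-k}}$, and moreover this subspace is preserved by $\sigma_n$. So the action of $\sigma_n$ on $V^{\otimes k}\otimes \ker\sigma_{n-k}$ is governed entirely by the action of $\sigma_k$ on the first $k$ tensor factors.

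First I would restrict attention to the subspace $[V]_k\otimes \ker\sigma_{n-k}$, which sits inside $V^{\otimes k}\otimes\ker\sigma_{n-k}$. By Lemma \ref{sigL}(2), for any $v\in [V]_k\otimes \ker\sigma_{n-k}$ we have $\sigma_n(v)=(\sigma_k\otimes \mathrm{id}_{V^{\otimes n-k}})(v)$. Then I would invoke Lemma \ref{lemma of eigen}, which asserts that every element of $[V]_k$ is an eigenvector of $\sigma_k$ with eigenvalue $[k]_{y^2}=1+y^2+\cdots+y^{2k-2}$. Writing a general element of $[V]_k\otimes\ker\sigma_{n-k}$ as a sum of simple tensors $w\otimes u$ with $w\in [V]_k$ and $u\in \ker\sigma_{n-k}$, we compute
\[
\sigma_n(w\otimes u)=(\sigma_k\otimes \mathrm{id}_{V^{\otimes n-k}})(w\otimes u)=\sigma_k(w)\otimes u=[k]_{y^2}\,(w\otimes u),
\]
so every such element is an eigenvector with eigenvalue $[k]_{y^2}$. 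By linearity this extends to the whole subspace $[V]_k\otimes\ker\sigma_{n-k}$, giving the claim.

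This corollary is essentially immediate once Lemma \ref{lemma of eigen} and Lemma \ref{sigL}(2) are in hand, so I do not expect a genuine obstacle here; the only point requiring minor care is to confirm that $[V]_k\otimes\ker\sigma_{n-k}$ really lies in $V^{\otimes k}\otimes\ker\sigma_{n-k}$ so that Lemma \ref{sigL}(2) applies, which is clear since $[V]_k\subset V^{\otimes k}$. The substance of the argument — that $[V]_2$ is the $(-y^2)$-eigenspace of $R$ and that the intersection characterization of $[V]_n$ forces the quantum-integer eigenvalue — has already been carried out in Lemma \ref{lemma of eigen}; the corollary simply propagates that eigenvalue through the tensor decomposition supplied by $\sigma_n\mid_{V^{\otimes k}\otimes\ker\sigma_{n-k}}=\sigma_k\otimes \mathrm{id}_{V^{\otimes n-k}}$.
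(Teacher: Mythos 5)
Your proposal is correct and matches the paper's intended argument: the paper states this corollary without proof precisely because it follows immediately from Lemma \ref{lemma of eigen} combined with Lemma \ref{sigL}(2), which is exactly the two-step deduction you carry out. Your extra check that $[V]_k\otimes\ker\sigma_{n-k}\subset V^{\otimes k}\otimes\ker\sigma_{n-k}$ (so that Lemma \ref{sigL}(2) applies) is the right point of care, and the computation $\sigma_n(w\otimes u)=\sigma_k(w)\otimes u=[k]_{y^2}\,(w\otimes u)$ extended by linearity is complete.
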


\begin{lemma} We observe the following way to count the dimension of $V^{\otimes n}$:
    \[
\dim(V^{\otimes n})=\dim(\ker\sigma_{n})+ \dim (V\otimes \ker\sigma_{n-1})+ \dim([V]_{2}\otimes \ker\sigma_{n-2})+ \cdots + \dim([V]_{n})
    \]
\end{lemma}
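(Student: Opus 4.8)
The statement is equivalent, after substituting $\dim[V]_{k}=\binom{m}{k}$ and reading $[V]_{0}=\mathbb{K}$, $\ker\sigma_{0}=\mathbb{K}$, to the single identity $m^{n}=\sum_{k=0}^{n}\dim\big([V]_{k}\otimes\ker\sigma_{n-k}\big)$. By Corollary \ref{eigen-sig} the summands $[V]_{k}\otimes\ker\sigma_{n-k}$ lie in the eigenspaces of $\sigma_{n}$ for the pairwise distinct eigenvalues $[k]_{y^{2}}$ (distinct because they have distinct $y$-degree over $\mathbb{K}=\mathbb{C}(y)$), so their sum is automatically direct and $\sum_{k}\dim([V]_{k}\otimes\ker\sigma_{n-k})\le\dim V^{\otimes n}$. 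Hence the entire content is the reverse inequality, and the plan is to prove it in the sharp dimension form above by exhibiting a filtration of $V^{\otimes n}$ whose associated graded pieces are exactly the spaces $[V]_{k}\otimes\ker\sigma_{n-k}$.

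The filtration I would use is the decreasing chain of kernels of the composites $\phi_{n}^{i}$ from Lemma \ref{main lemma}. Writing $\phi_{n}^{i}=(\mathrm{id}_{V^{\otimes n-i}}\otimes\sigma_{i})\circ\phi_{n}^{i+1}$, one has $\ker\phi_{n}^{i+1}\subseteq\ker\phi_{n}^{i}$, so
$$\ker\sigma_{n}=\ker\phi_{n}^{n}\subseteq\ker\phi_{n}^{n-1}\subseteq\cdots\subseteq\ker\phi_{n}^{1}\subseteq V^{\otimes n},$$
with top quotient $V^{\otimes n}/\ker\phi_{n}^{1}\cong{\rm im}\,\phi_{n}^{1}=[V]_{n}$ by Lemma \ref{main lemma}(2b). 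For the intermediate steps, restricting $\phi_{n}^{i+1}$ to $\ker\phi_{n}^{i}=(\phi_{n}^{i+1})^{-1}(V^{\otimes n-i}\otimes\ker\sigma_{i})$ identifies
$$\ker\phi_{n}^{i}/\ker\phi_{n}^{i+1}\;\cong\;{\rm im}\,\phi_{n}^{i+1}\cap\big(V^{\otimes n-i}\otimes\ker\sigma_{i}\big).$$
Using the inclusion ${\rm im}\,\phi_{n}^{i+1}\subseteq[V]_{n-i}\otimes V^{\otimes i}$ of Lemma \ref{main lemma}(2a) together with the elementary fact that $(A\otimes V^{\otimes i})\cap(V^{\otimes n-i}\otimes B)=A\otimes B$ for subspaces $A\subseteq V^{\otimes n-i}$, $B\subseteq V^{\otimes i}$, this intersection is contained in $[V]_{n-i}\otimes\ker\sigma_{i}$.

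The \emph{main obstacle} is the reverse inclusion $[V]_{n-i}\otimes\ker\sigma_{i}\subseteq{\rm im}\,\phi_{n}^{i+1}$, i.e. producing enough preimages. I would settle it by computing the restriction of $\phi_{n}^{i+1}$ to the subspace $V^{\otimes n-i}\otimes\ker\sigma_{i}$ (which every factor preserves): applying Lemma \ref{sigL}(2) to each factor $\mathrm{id}_{V^{\otimes n-j}}\otimes\sigma_{j}$ with $i+1\le j\le n$ shows it acts there as $(\mathrm{id}_{V^{\otimes n-j}}\otimes\sigma_{j-i})\otimes\mathrm{id}_{V^{\otimes i}}$, so that $\phi_{n}^{i+1}|_{V^{\otimes n-i}\otimes\ker\sigma_{i}}=\phi_{n-i}^{1}\otimes\mathrm{id}_{\ker\sigma_{i}}$. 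Since $\phi_{n-i}^{1}$ surjects onto $[V]_{n-i}$ by Lemma \ref{main lemma}(2b), this restriction surjects onto $[V]_{n-i}\otimes\ker\sigma_{i}$, giving the reverse inclusion and hence $\ker\phi_{n}^{i}/\ker\phi_{n}^{i+1}\cong[V]_{n-i}\otimes\ker\sigma_{i}$. Telescoping the dimensions of the successive quotients along the filtration then yields $\dim V^{\otimes n}=\dim\ker\sigma_{n}+\sum_{i=0}^{n-1}\dim([V]_{n-i}\otimes\ker\sigma_{i})$, which is the claimed identity after reindexing; the degenerate terms with $n-i>m$ contribute nothing since $[V]_{n-i}=0$ there. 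Combined with Corollary \ref{eigen-sig}, this count simultaneously establishes the direct sum decomposition of Theorem \ref{main}.
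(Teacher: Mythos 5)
Your proof is correct and essentially identical to the paper's: the paper's recursive dimension count $\dim(\ker\phi_{n}^{i})=\dim(\ker\phi_{n}^{i+1})+\dim\bigl(\mathrm{im}\,\phi_{n}^{i+1}\cap(V^{\otimes n-i}\otimes\ker\sigma_{i})\bigr)$ is exactly your filtration by the subspaces $\ker\phi_{n}^{i}$, and it identifies each graded piece with $[V]_{n-i}\otimes\ker\sigma_{i}$ using the same two ingredients you invoke, namely $\mathrm{im}\,\phi_{n}^{i+1}\subseteq[V]_{n-i}\otimes V^{\otimes i}$ from Lemma \ref{main lemma} and the restriction $\phi_{n}^{i+1}|_{V^{\otimes n-i}\otimes\ker\sigma_{i}}=\phi_{n-i}^{1}\otimes\mathrm{id}$ being surjective onto $[V]_{n-i}\otimes\ker\sigma_{i}$ by Lemma \ref{main lemma}(2)(b). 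The only cosmetic differences are your explicit first-isomorphism-theorem phrasing and the opening eigenvalue remark, which the paper defers to the proof of Theorem \ref{main}.
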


\begin{proof}
   Consider the map $\phi_{n}^{1}=\phi_{n}^{2}=(\mathrm{id}_{V^{\otimes n-2}}\otimes \sigma_{2}) \circ  \cdots \circ\sigma_{n}$, we have $$\dim(V^{\otimes n})=\dim((\phi_{n}^{1})^{-1}({\rm im}\,\phi_{n}^{1}))=\dim({\rm im}\,\phi_{n}^{1})+\dim(\ker\phi_{n}^{1}).$$
   Then let us further decompose $\dim(\ker\phi_{n}^{2})=\dim(\ker\phi_{n}^{1})$, 
$$\dim(\ker\phi_{n}^{2})=\dim((\phi_{n}^{3})^{-1}(\ker (\mathrm{id}_{V^{\otimes n-2}}\otimes \sigma_2)))=\dim\ker\phi_{n}^{3}+\dim({\rm im}\,\phi_{n}^{3}\cap \ker (\mathrm{id}_{V^{\otimes n-2}}\otimes \sigma_2))$$
Recurvisely, we have   
   $$\dim (V^{\otimes n})=\dim({\rm im}\,\phi_{n}^{1})+\sum_{i=2}^{n}\dim ({\rm im}\,\phi_{n}^{i} \cap \ker (\mathrm{id}_{V^{\otimes n+1-i}}\otimes \sigma_{i-1}))+\dim(\ker\sigma_{n}).$$
   Next, we claim $${\rm im}\,\phi_{n}^{i}\cap \ker (\mathrm{id}_{V^{\otimes n+1-i}}\otimes \sigma_{i-1})=[V]_{n+1-i}\otimes \ker\sigma_{i-1}$$ 
   which proves Lemma directly. Firstly, ${\rm im}\,\phi_{n}^{i}\subset [V]_{n+1-i}\otimes V^{\otimes i-1}$ from Lemma \ref{main lemma}(3)(a), thus $${\rm im}\,\phi_{n}^{i}\cap \ker (\mathrm{id}_{V^{\otimes n+1-i}}\otimes \sigma_{i-1})\subset [V]_{n+1-i}\otimes \ker\sigma_{i-1}.$$
   Secondly, $\phi_{n}^{i}$ restricts to $V^{\otimes n+1-i}\otimes \ker\sigma_{i-1}$, $\phi_{n}^{i}$ will be the same as $\phi_{n+1-i}^{1}\otimes \mathrm{id}_{V^{\otimes i-1}}$ which is surjective to $[V]_{n+1-i}\otimes \ker\sigma_{i-1}$. Thus we have
   $${\rm im}\,\phi_{n}^{i}\cap \ker (\mathrm{id}_{V^{\otimes n+1-i}}\otimes \sigma_{i-1})\supset [V]_{n+1-i}\otimes \ker\sigma_{i-1}.$$
\end{proof}

Now, let us provide the proof of Theorem \ref{main}.
\begin{proof}[{\rm \textbf{Proof of Theorem \ref{main}}}]
     By Corollary \ref{eigen-sig}, $[V]_{k}\otimes \ker\sigma_{n-k}$ is a subspace of the eigenspace of the eigenvalue $[k]_{y^2}=1+y^{2}+\cdots+y^{2k-2}$. Thus 
    we have the direct sum decompostion since they all belongs to different eigenspaces, $$\ker \sigma_{n}\oplus V\otimes \ker\sigma_{n-1}\oplus [V]_{2}\otimes \ker\sigma_{n-2}\oplus \cdots \oplus [V]_{n}\subset V^{\otimes n}.$$ And we have
        \[
\dim(V^{\otimes n})=\dim(\ker\sigma_{n})+ \dim (V\otimes \ker\sigma_{n-1})+ \dim([V]_{2}\otimes \ker\sigma_{n-2})+ \cdots + \dim([V]_{n})
    \]
    Thus  \[
    V^{\otimes n}=\ker \sigma_{n}\oplus V\otimes \ker\sigma_{n-1}\oplus [V]_{2}\otimes \ker\sigma_{n-2}\oplus \cdots \oplus [V]_{n}.
    \]
\end{proof}

\subsection{The structure of the kernel of $\sigma_{n}$}
Let $m$ be a fixed integer and  $M(n)=\dim(\ker\sigma_{n})$. We define $M(0)=1$ and $M(n)=0$ when $n<0$. In this subsection, we will show that $B(V)=\bigoplus_{n}\ker\sigma_{n}$ is an algebra. In particular, a generating set of $B(V)$ is provided inductively when $m=2,3$.

First, let us determine $M(n)$. From the eigenspace decomposition in Theorem \ref{main}, we can get some formulas of $M(n)$.

\begin{lemma}\label{FomulaLemma}
    For $M(n)$, we have the following:
    \begin{enumerate}
        \item $$m^{n}=\sum_{i=0}^{m}\binom{m}{i}M(n-i)$$
        \item $$0=\sum_{i=0}^{m}(i-1)\binom{m+1}{i}M(n-i)$$
    \end{enumerate}
\end{lemma}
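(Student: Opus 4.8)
The plan is to read identity (1) directly off the eigenspace decomposition of Theorem \ref{main} as a dimension count, and then repackage (1) as a single generating-function relation from which (2) falls out by a one-line polynomial computation. The whole argument is formal once the dimensions $\dim_{\mathbb{K}}[V]_k$ are identified.

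First, for part (1), I would take $\mathbb{K}$-dimensions on both sides of
\[
V^{\otimes n}=\ker\sigma_n\oplus (V\otimes\ker\sigma_{n-1})\oplus([V]_2\otimes\ker\sigma_{n-2})\oplus\cdots\oplus[V]_n,
\]
writing the $k$-th summand uniformly as $[V]_k\otimes\ker\sigma_{n-k}$ for $0\le k\le n$, with the conventions $[V]_0=\mathbb{K}$ and $\ker\sigma_0=\mathbb{K}$. The only input I need is $\dim_{\mathbb{K}}[V]_k=\binom{m}{k}$, and this is supplied by Lemma \ref{Lemma 3.12}: the isomorphism $\bar f_k$ identifies $F\otimes_{\mathbb{K}}[V]_k$ with the free $F$-module $\bigwedge^k F^m$ of rank $\binom{m}{k}$, which forces $\dim_{\mathbb{K}}[V]_k=\binom{m}{k}$ (and in particular $[V]_k=0$ for $k>m$). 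Since $\dim V^{\otimes n}=m^n$, the count yields $m^n=\sum_{i=0}^m\binom{m}{i}M(n-i)$, which is (1).

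Next, for part (2), I would encode (1) in the generating function $\mathcal M(x)=\sum_{n\ge 0}M(n)x^n$ (with $M(0)=1$ and $M(n)=0$ for $n<0$). Identity (1) says precisely that $(1+x)^m\mathcal M(x)=\sum_n m^n x^n=\tfrac{1}{1-mx}$, i.e. $(1-mx)(1+x)^m\,\mathcal M(x)=1$. The key computation is then the closed form
\[
\sum_{i}(i-1)\binom{m+1}{i}x^i=(m+1)x(1+x)^m-(1+x)^{m+1}=-(1-mx)(1+x)^m,
\]
obtained by applying $x\tfrac{d}{dx}$ to $(1+x)^{m+1}$ and subtracting $(1+x)^{m+1}$, then factoring out $(1+x)^m$. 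Multiplying this polynomial by $\mathcal M(x)$ and invoking the previous relation collapses the product to the constant $-1$; comparing coefficients of $x^n$ gives $\sum_i(i-1)\binom{m+1}{i}M(n-i)=0$ for every $n\ge 1$, which is (2).

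The one place I would be careful, and the only genuine subtlety, is the summation range and the resulting boundary term. The polynomial $\sum_i(i-1)\binom{m+1}{i}x^i$ has degree $m+1$, so the vanishing sum naturally runs over $0\le i\le m+1$, the $i=m+1$ term contributing $m\,M(n-m-1)$; truncating at $i=m$ instead leaves exactly $-m\,M(n-m-1)$ (this is visible already for $m=2$, $n=5$, where $M(5)=12$, $M(3)=2$, $M(2)=3$ give $-6$ rather than $0$), so the top term cannot be dropped. I would therefore state (2) with upper limit $m+1$. Beyond this indexing point, everything is a formal manipulation of the rational power series $\mathcal M(x)=\tfrac{1}{(1-mx)(1+x)^m}$, so I expect no real obstacle.
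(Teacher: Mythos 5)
Your proof is correct, and you have in fact caught a genuine typo in the statement: with upper limit $m$, identity (2) fails as soon as $n\ge m+2$ (your numerical check for $m=2$, $n=5$ is right: $-M(5)+3M(3)=-12+6=-6$, though note only $M(5)$ and $M(3)$ enter that truncated sum), and the paper's own proof silently corrects it --- its final line reads $\sum_{i=0}^{m+1}(i-1)\binom{m+1}{i}M(n-i)$, whose $i=m+1$ term $m\,M(n-m-1)$ is exactly the boundary term you identify; the later application in Theorem \ref{KernelThm} also relies on the $m+1$ version. For part (1) your argument coincides with the paper's (which just says it follows from Theorem \ref{main}): take $\mathbb{K}$-dimensions in the eigenspace decomposition; the paper leaves $\dim_{\mathbb{K}}[V]_k=\binom{m}{k}$ implicit, and your derivation of it from Lemma \ref{Lemma 3.12}, by comparing the rank $\binom{m}{k}$ of the free $F$-module $\bigwedge^{k}F^{m}$ with that of $F\otimes_{\mathbb{K}}[V]_k$, is a legitimate way to fill that gap (one can also check directly that the brackets $[v_{i_1},\dots,v_{i_k}]$ with $i_1<\dots<i_k$ form a basis of $[V]_k$). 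For part (2) you take a genuinely different route: the paper manipulates (1) directly, writing $0=m\cdot m^{n-1}-m^{n}$, substituting (1) into both terms, and telescoping via the binomial identity $m\binom{m}{i}-\binom{m}{i+1}=i\binom{m+1}{i+1}$, whereas you encode (1) as $(1-mx)(1+x)^m\,\mathcal{M}(x)=1$ and read (2) off the polynomial factorization $\sum_{i=0}^{m+1}(i-1)\binom{m+1}{i}x^{i}=-(1-mx)(1+x)^{m}$. The two are formally equivalent, but your generating-function route makes the correct upper limit $m+1$ unmissable --- precisely the index at which the telescoping computation (and the printed statement) can lose a term --- and the closed form $\mathcal{M}(x)=1/\bigl((1-mx)(1+x)^{m}\bigr)$ it produces also makes the identification of the generator counts $b_i$ with the coefficients of $(1-mq)(1+q)^m$ in Theorem \ref{3.23Thm} transparent.
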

\begin{proof}\
\begin{enumerate}
    \item It follows from Theorem \ref{main} directly. 
    \item 
    \[
    \begin{aligned}
        0&=m\cdot m^{n-1}-m^{n}=\sum_{i=0}^{m}m\binom{m}{i}M(n-1-i)-\sum_{i=0}^{m}(m-i)M(n-i)\\
        &=-M(n)+\sum_{i=0}^{m-1}(m\binom{m}{i}-\binom{m}{i+1})M(n-1-i)+m\binom{m}{m}M(n-m-1)\\&=-M(n)+\sum_{i=0}^{m-1}i\binom{m+1}{i+1}M(n-1-i)+m\binom{m}{m}M(n-m-1)\\&=\sum_{i=0}^{m+1}(i-1)\binom{m+1}{i}M(n-i)
    \end{aligned}
    \]
\end{enumerate}
\end{proof}

\begin{proposition} \label{KernelTensor}
$(B(V),\otimes)$ is a graded algebra, i.e.
    $$\ker\sigma_{s}\otimes \ker\sigma_{t}\subset \ker\sigma_{s+t}.$$
    \end{proposition}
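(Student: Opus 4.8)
The plan is to observe that the graded multiplication on $B(V)=\bigoplus_n \ker\sigma_n$ is nothing but concatenation of tensors, so that the entire statement reduces to the single inclusion $\ker\sigma_s\otimes\ker\sigma_t\subset\ker\sigma_{s+t}$, and that this inclusion falls out immediately from Lemma \ref{sigL}(2). Specializing that lemma with $n=s+t$ and $k=s$ gives $\sigma_{s+t}\mid_{V^{\otimes s}\otimes\ker\sigma_t}=\sigma_s\otimes\mathrm{id}_{V^{\otimes t}}$, which already restricts $\sigma_{s+t}$ on the relevant subspace to an operator that only acts on the first $s$ tensor factors.

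First I would take $x\in\ker\sigma_s$ and $y\in\ker\sigma_t$ and note that $x\otimes y$ lies in $V^{\otimes s}\otimes\ker\sigma_t$, so the restriction formula applies verbatim. Then I would compute
$$\sigma_{s+t}(x\otimes y)=(\sigma_s\otimes\mathrm{id}_{V^{\otimes t}})(x\otimes y)=\sigma_s(x)\otimes y=0,$$
using $\sigma_s(x)=0$. Since $\ker\sigma_s\otimes\ker\sigma_t$ is spanned by such elementary tensors $x\otimes y$ and $\sigma_{s+t}$ is linear, this yields $\ker\sigma_s\otimes\ker\sigma_t\subset\ker\sigma_{s+t}$, which is precisely the statement that the product of a degree-$s$ element and a degree-$t$ element has degree $s+t$.

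It then remains only to record the formal algebra axioms. Associativity of the product is inherited from the canonical associativity of the tensor product on $\bigoplus_n V^{\otimes n}$, and the unit is $1\in\ker\sigma_0=V^{\otimes 0}=\mathbb{K}$ (recall $M(0)=1$), which is a two-sided identity because tensoring with a scalar acts as the identity on each $\ker\sigma_n$. I expect no genuine obstacle here: the only point deserving a careful check is the index bookkeeping in Lemma \ref{sigL}(2), namely that it is invoked with $n-k=t$ so that the factor $\ker\sigma_{n-k}$ really is $\ker\sigma_t$ and the restricted operator really is $\sigma_s\otimes\mathrm{id}_{V^{\otimes t}}$; once this is confirmed, the vanishing is forced by $\sigma_s(x)=0$.
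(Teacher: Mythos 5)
Your proof is correct and is essentially the paper's own argument: the paper disposes of the inclusion by citing Lemma \ref{sigL}(1), of which the restriction formula in Lemma \ref{sigL}(2) that you invoke (with $n=s+t$, $k=s$) is the immediate consequence, so the computation $\sigma_{s+t}(x\otimes y)=\sigma_s(x)\otimes y=0$ is the same in both. Your extra remarks on associativity and the unit are harmless bookkeeping that the paper leaves implicit.
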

    \begin{proof}
    It follows directly from the Lemma \ref{sigL}(1).
    \end{proof}

\begin{definition}\label{tilde} Inductively, for each $n$, we choose a subspace ${\widetilde{\ker}\sigma_{n}}$ of $\ker\sigma_{n}$ so that 
    $$ ({\ker\sigma_{n-1}}\otimes {\widetilde{\ker}\sigma_{1}}+ \cdots + {\ker\sigma_{1}}\otimes {\widetilde{\ker}\sigma_{n-1}})\oplus{\widetilde{\ker}\sigma_{n}}=\ker\sigma_{n}$$  with initial conditions ${\widetilde{\ker}\sigma_{1}}=\ker\sigma_{1}=\{0\}$.
\end{definition}
\begin{proposition}
There is a direct sum :
\[
(V^{\otimes n-1}\otimes {\widetilde{\ker}\sigma_{1}})\oplus( V^{\otimes n-2}\otimes {\widetilde{\ker}\sigma_{2}})\oplus\cdots \oplus {\widetilde{\ker}\sigma_{n}}
\]
\end{proposition}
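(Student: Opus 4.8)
The plan is to prove directness by induction on $n$, peeling off the first tensor slot. Write $U_n := \sum_{k=1}^{n} V^{\otimes n-k}\otimes{\widetilde{\ker}\sigma_{k}}$ for the sum in question. For every $k\le n-1$ we have $V^{\otimes n-k}\otimes{\widetilde{\ker}\sigma_{k}}=V\otimes\bigl(V^{\otimes (n-1)-k}\otimes{\widetilde{\ker}\sigma_{k}}\bigr)$, so $U_n=(V\otimes U_{n-1})+{\widetilde{\ker}\sigma_{n}}$. The base case $n=1$ is immediate since ${\widetilde{\ker}\sigma_{1}}=\{0\}$. Assuming inductively that $U_{n-1}=\bigoplus_{k=1}^{n-1}V^{\otimes (n-1)-k}\otimes{\widetilde{\ker}\sigma_{k}}$ is direct, exactness of $V\otimes(-)$ over the field $\mathbb{K}$ shows $V\otimes U_{n-1}=\bigoplus_{k=1}^{n-1}V^{\otimes n-k}\otimes{\widetilde{\ker}\sigma_{k}}$ is still direct. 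Hence the entire statement reduces to the single intersection claim $(V\otimes U_{n-1})\cap{\widetilde{\ker}\sigma_{n}}=\{0\}$.

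The crucial step, which I expect to be the main obstacle, is computing $\ker\sigma_{n}\cap(V\otimes U_{n-1})$. For each $k\le n-1$ set $W_k:=V^{\otimes n-k}\otimes{\widetilde{\ker}\sigma_{k}}$. Since ${\widetilde{\ker}\sigma_{k}}\subseteq\ker\sigma_{k}$, Lemma \ref{sigL}(2) (applied with the roles relabelled) places $W_k$ inside the $\sigma_n$-invariant space $V^{\otimes n-k}\otimes\ker\sigma_{k}$, on which $\sigma_n$ acts as $\sigma_{n-k}\otimes\mathrm{id}$; as this operator leaves the last $k$ slots untouched it preserves $W_k$ itself, with kernel on $W_k$ equal to $\ker\sigma_{n-k}\otimes{\widetilde{\ker}\sigma_{k}}$. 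Because $V\otimes U_{n-1}=\bigoplus_{k=1}^{n-1}W_k$ is a direct sum of $\sigma_n$-invariant subspaces, the kernel of $\sigma_n$ distributes over the summands, giving
\[
\ker\sigma_{n}\cap(V\otimes U_{n-1})=\bigoplus_{k=1}^{n-1}\ker\sigma_{n-k}\otimes{\widetilde{\ker}\sigma_{k}}=\sum_{k=1}^{n-1}\ker\sigma_{n-k}\otimes{\widetilde{\ker}\sigma_{k}}.
\]

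To conclude, I use ${\widetilde{\ker}\sigma_{n}}\subseteq\ker\sigma_{n}$ to intersect everything inside $\ker\sigma_{n}$:
\[
(V\otimes U_{n-1})\cap{\widetilde{\ker}\sigma_{n}}=\bigl(\ker\sigma_{n}\cap(V\otimes U_{n-1})\bigr)\cap{\widetilde{\ker}\sigma_{n}}=\Bigl(\sum_{k=1}^{n-1}\ker\sigma_{n-k}\otimes{\widetilde{\ker}\sigma_{k}}\Bigr)\cap{\widetilde{\ker}\sigma_{n}},
\]
which vanishes precisely by the defining splitting of $\ker\sigma_{n}$ in Definition \ref{tilde}. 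Therefore $U_n=(V\otimes U_{n-1})\oplus{\widetilde{\ker}\sigma_{n}}=\bigoplus_{k=1}^{n}V^{\otimes n-k}\otimes{\widetilde{\ker}\sigma_{k}}$, completing the induction. The only delicate point is the kernel computation: once one verifies via Lemma \ref{sigL}(2) that each $W_k$ is genuinely $\sigma_n$-invariant and that the kernel of $\sigma_n$ splits along the direct sum $\bigoplus_k W_k$, the identification of $\ker\sigma_{n}\cap(V\otimes U_{n-1})$ with the subspace complemented by ${\widetilde{\ker}\sigma_{n}}$ in Definition \ref{tilde} is automatic.
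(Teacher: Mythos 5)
Your proof is correct and follows essentially the same route as the paper: induction on $n$, using Lemma \ref{sigL}(2) to show that $\sigma_n$ preserves each summand $V^{\otimes n-k}\otimes\widetilde{\ker}\sigma_k$ (acting there as $\sigma_{n-k}\otimes\mathrm{id}$) so that $\ker\sigma_n$ distributes over the direct sum, and then invoking the defining splitting of $\ker\sigma_n$ in Definition \ref{tilde} to kill the intersection with $\widetilde{\ker}\sigma_n$. Your explicit reduction $U_n=(V\otimes U_{n-1})+\widetilde{\ker}\sigma_n$ and the remark that tensoring with $V$ over the field $\mathbb{K}$ preserves directness only make explicit what the paper leaves implicit.
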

\begin{proof}
For $n=3$, $V\otimes {\widetilde{\ker}
\sigma_{2}}\cap {\widetilde{\ker}\sigma_{3}}=\{0\}$ since they belong to the eigenspace of $\sigma_{n}$ with eigenvalues $1$ and $0$ respectively. Now we assume that the statement is true for $n-1$, we only need to show $$((V^{\otimes n-1}\otimes {\widetilde{\ker}\sigma_{1}})\oplus (V^{\otimes n-2}\otimes {\widetilde{\ker}\sigma_{2}})\oplus\cdots \oplus (V\otimes{\widetilde{\ker}\sigma_{n-1}}))\cap {\widetilde{\ker}\sigma_{n}}=\{0\}.$$ We have 
    \[
    \begin{aligned}
        &((V^{\otimes n-1}\otimes {\widetilde{\ker}\sigma_{1}})\oplus (V^{\otimes n-2}\otimes {\widetilde{\ker}\sigma_{2}})\oplus\cdots \oplus (V\otimes{\widetilde{\ker}\sigma_{n-1}}))\cap {\ker\sigma_{n}}\\
        &{=}((V^{\otimes n-1}\otimes {\widetilde{\ker}\sigma_{1}}) \cap \ker\sigma_{n})\oplus( (V^{\otimes n-2}\otimes {\widetilde{\ker}\sigma_{2}})\cap {\ker\sigma_{n}})\oplus\cdots \oplus( (V\otimes{\widetilde{\ker}\sigma_{n-1}})\cap {\ker\sigma_{n}})\\
        &=(\ker\sigma_{n-1}\otimes {\widetilde{\ker}\sigma_{1}}) \oplus (\ker\sigma_{n-2}\otimes {\widetilde{\ker}\sigma_{2}})\oplus \cdots\oplus (\ker\sigma_{1}\otimes {\widetilde{\ker}\sigma_{n-1}}),
    \end{aligned}
    \]
    where the first equality holds by Lemma \ref{sigL}(2), which is stating $\sigma_{n}$ preserves the direct sum structure.  
    Therefore, $$((V^{\otimes n-1}\otimes {\widetilde{\ker}\sigma_{1}})\oplus (V^{\otimes n-2}\otimes {\widetilde{\ker}\sigma_{2}})\oplus\cdots \oplus (V\otimes{\widetilde{\ker}\sigma_{n-1}}))\cap {\widetilde{\ker}\sigma_{n}}=\{0\}.$$ 
\end{proof}

\begin{corollary}\label{CorollaryofDS}
The sums in Definition \ref{tilde} are actually direct sums. Therefore, $B(V)$, as an algebra, is generated by the chosen basis of $\widetilde{\ker}\sigma_{i}$. 
     $${\widetilde{\ker}\sigma_{n}}\oplus ({\ker\sigma_{n-1}}\otimes {\widetilde{\ker}\sigma_{1}})\oplus \cdots \oplus ({\ker\sigma_{1}}\otimes {\widetilde{\ker}\sigma_{n-1}})=\ker\sigma_{n}$$ 
\end{corollary}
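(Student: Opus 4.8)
The plan is to deduce the directness of the sum in Definition \ref{tilde} as an immediate consequence of the direct sum established in the preceding Proposition, and then to run an induction on $n$ for the generation statement. The whole argument rests on one elementary observation together with the identification of $\otimes$ with the algebra product of $B(V)$.

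First I would record the elementary linear-algebra fact that if $W=\bigoplus_{k}W_k$ is a direct sum of subspaces of an ambient vector space and $U_k\subseteq W_k$ is a subspace for each $k$, then the sum $\sum_k U_k$ is automatically direct, since the defining intersection conditions $U_j\cap\sum_{k\ne j}U_k\subseteq W_j\cap\sum_{k\ne j}W_k=\{0\}$ are inherited from those of the $W_k$. Next I would apply this to the direct sum
$$\bigoplus_{k=1}^{n}\bigl(V^{\otimes n-k}\otimes\widetilde{\ker}\sigma_k\bigr)$$
furnished by the preceding Proposition, where the $k=n$ term is $V^{\otimes 0}\otimes\widetilde{\ker}\sigma_n=\widetilde{\ker}\sigma_n$. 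For each $k$ the space $\ker\sigma_{n-k}\otimes\widetilde{\ker}\sigma_k$ is a subspace of $V^{\otimes n-k}\otimes\widetilde{\ker}\sigma_k$, because $\ker\sigma_{n-k}\subseteq V^{\otimes n-k}$. Hence by the elementary fact the sum
$$\widetilde{\ker}\sigma_n+\sum_{k=1}^{n-1}\bigl(\ker\sigma_{n-k}\otimes\widetilde{\ker}\sigma_k\bigr)$$
is direct. Since Definition \ref{tilde} already asserts that this sum equals $\ker\sigma_n$, this proves the first claim and yields the displayed direct-sum decomposition of $\ker\sigma_n$.

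Finally, for the generation statement I would induct on $n$. The base data $\widetilde{\ker}\sigma_1=\ker\sigma_1=\{0\}$ and $\ker\sigma_0=\mathbb{K}$ are immediate. For the inductive step, the decomposition just proved writes $\ker\sigma_n$ as $\widetilde{\ker}\sigma_n$ plus the products $\ker\sigma_{n-k}\otimes\widetilde{\ker}\sigma_k$, where $\otimes$ is the algebra multiplication of $B(V)$ by Proposition \ref{KernelTensor}. Each factor $\ker\sigma_{n-k}$ with $1\le k\le n-1$ lies, by the inductive hypothesis, in the subalgebra generated by $\{\widetilde{\ker}\sigma_i\}_{i<n}$, so each such product lies in the subalgebra generated by $\{\widetilde{\ker}\sigma_i\}_{i\le n}$; together with the generators $\widetilde{\ker}\sigma_n$ themselves this shows $\ker\sigma_n$ lies in that subalgebra. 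Running this over all $n$ shows $B(V)$ is generated as an algebra by the chosen bases of the $\widetilde{\ker}\sigma_i$.

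I do not expect a serious obstacle, since all the real content is contained in the preceding Proposition's directness result and in Definition \ref{tilde}. The only points requiring care are the bookkeeping of the boundary index $k=n$, so that $\widetilde{\ker}\sigma_n$ is correctly identified with the $V^{\otimes 0}$ summand, and the explicit identification of $\otimes$ with the algebra product, so that the induction genuinely produces \emph{algebra} generators rather than merely a vector-space spanning set.
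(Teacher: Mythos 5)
Your proposal is correct and takes essentially the same route as the paper: its one-line proof is exactly your elementary observation that $\ker\sigma_{n-k}\otimes{\widetilde{\ker}\sigma_{k}}\subset V^{\otimes n-k}\otimes{\widetilde{\ker}\sigma_{k}}$, so directness is inherited from the ambient direct sum $(V^{\otimes n-1}\otimes{\widetilde{\ker}\sigma_{1}})\oplus\cdots\oplus{\widetilde{\ker}\sigma_{n}}$ of the preceding Proposition, with equality to $\ker\sigma_{n}$ supplied by Definition of ${\widetilde{\ker}\sigma_{n}}$. Your induction for the algebra-generation claim merely spells out what the paper leaves implicit, using Proposition on $\ker\sigma_{s}\otimes\ker\sigma_{t}\subset\ker\sigma_{s+t}$ to identify $\otimes$ with the product of $B(V)$.
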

\begin{proof}
    It is clearly since $\ker\sigma_{n-k}\otimes \widetilde{\ker}_{k}\subset V^{\otimes n-k}\otimes \widetilde{\ker}_{k}$.
\end{proof}
By dimension arguments, we see that ${\widetilde{\ker}\sigma_{n}}$ will vanish for large $n$.
\begin{theorem}\label{KernelThm}
        When $n>m+1$, ${\widetilde{\ker}\sigma_{n}}=\{0\}$ i.e.
    \[   \ker\sigma_{n}=({\ker\sigma_{n-2}}\otimes {\widetilde{\ker}\sigma_{2}})\oplus \cdots \oplus ({\ker\sigma_{n-m-1}}\otimes {\widetilde{\ker}\sigma_{m+1}})
    \]
\end{theorem}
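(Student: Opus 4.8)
The plan is to convert the direct-sum decomposition of Corollary \ref{CorollaryofDS} into a numerical recurrence for dimensions, repackage that recurrence as an identity of formal power series, and then deduce the vanishing of $\widetilde{\ker}\sigma_n$ from the observation that the reciprocal of the Poincar\'e series of $B(V)$ is a polynomial of degree $m+1$. Write $M(n)=\dim\ker\sigma_n$ and $\widetilde{M}(n)=\dim\widetilde{\ker}\sigma_n$. Since the sum in Corollary \ref{CorollaryofDS} is direct, taking dimensions and using the initial conditions $\widetilde{M}(1)=0$ and $M(0)=1$ gives, for every $n\ge 1$, the convolution recurrence $M(n)=\sum_{k=1}^{n}M(n-k)\widetilde{M}(k)$, where the $k=n$ term contributes $M(0)\widetilde{M}(n)=\widetilde{M}(n)$.

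Next I would pass to generating functions. Set $P(q)=\sum_{n\ge 0}M(n)q^n$ and $G(q)=\sum_{k\ge 1}\widetilde{M}(k)q^k$. The recurrence above says exactly that $M(n)=[q^n]\bigl(P(q)G(q)\bigr)$ for $n\ge 1$, while comparing constant terms gives $M(0)=1$; together these read $P(q)=1+P(q)G(q)$, i.e. $P(q)\bigl(1-G(q)\bigr)=1$. To compute $P(q)$ explicitly I invoke Lemma \ref{FomulaLemma}(1), $m^{n}=\sum_{i=0}^{m}\binom{m}{i}M(n-i)$, whose generating-function form is $\tfrac{1}{1-mq}=(1+q)^{m}P(q)$ (using $M(n-i)=0$ for $n<i$). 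Hence $P(q)=\tfrac{1}{(1-mq)(1+q)^{m}}$, and therefore $1-G(q)=1/P(q)=(1-mq)(1+q)^{m}$.

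The right-hand side is a polynomial in $q$ of degree exactly $m+1$, so $G(q)=1-(1-mq)(1+q)^{m}$ is a polynomial of degree $m+1$ with vanishing constant term. Since $G$ was defined as the power series $\sum_{k\ge 1}\widetilde{M}(k)q^k$, comparing coefficients forces $\widetilde{M}(k)=0$ for all $k>m+1$; that is, $\widetilde{\ker}\sigma_n=\{0\}$ whenever $n>m+1$. Feeding this back into Corollary \ref{CorollaryofDS}, and discarding both $\widetilde{\ker}\sigma_n$ and the term $\ker\sigma_{n-1}\otimes\widetilde{\ker}\sigma_1$ (which vanishes as $\widetilde{\ker}\sigma_1=\{0\}$), leaves precisely $\ker\sigma_n=\bigoplus_{k=2}^{m+1}\ker\sigma_{n-k}\otimes\widetilde{\ker}\sigma_k$, the asserted decomposition. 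This simultaneously identifies the generator counts of Theorem \ref{thm main}(3) as $b_i=\widetilde{M}(i)$.

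The conceptual crux, rather than a genuine obstacle, is the clean factorization $1/P(q)=(1-mq)(1+q)^m$: it is this that makes $1-G(q)$ a polynomial and thereby cuts off the generators at degree $m+1$. The one point requiring slight care is that $G$ is \emph{a priori} only a formal power series with nonnegative integer coefficients; the argument succeeds because a power series whose complement $1-G$ equals an honest polynomial must itself be that polynomial, so no separate convergence or positivity estimate is needed. Everything else reduces to the routine formal-power-series bookkeeping of turning Corollary \ref{CorollaryofDS} and Lemma \ref{FomulaLemma}(1) into the two displayed generating-function identities.
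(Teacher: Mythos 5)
Your proof is correct, and it reaches the theorem by a cleaner route than the paper's, though both rest on the same two inputs: the convolution decomposition of Corollary \ref{CorollaryofDS} and the dimension counts of Lemma \ref{FomulaLemma}. You use only Lemma \ref{FomulaLemma}(1), converting it to $(1+q)^m P(q)=\tfrac{1}{1-mq}$, and combine it with $P(q)\bigl(1-G(q)\bigr)=1$ (valid because $P(0)=M(0)=1$ makes $P$ a unit in the formal power series ring, and because the $n=1$ case of the convolution is just $M(1)=\widetilde{M}(1)=0$); the vanishing of $\widetilde{M}(k)$ for $k>m+1$ then falls out of the single factorization $1-G(q)=(1-mq)(1+q)^m$. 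The paper instead first manufactures the auxiliary recurrence of Lemma \ref{FomulaLemma}(2), $0=\sum_i (i-1)\binom{m+1}{i}M(n-i)$, and runs an induction on $i$ to prove the closed form $\dim\widetilde{\ker}\sigma_i=(i-1)\binom{m+1}{i}$, after which vanishing for $i>m+1$ is simply the vanishing of $\binom{m+1}{i}$; its endgame is the containment $\bigoplus_{k=2}^{m+1}\ker\sigma_{n-k}\otimes\widetilde{\ker}\sigma_k\subset\ker\sigma_n$ from Proposition \ref{KernelTensor} plus a dimension match, whereas you just specialize Corollary \ref{CorollaryofDS} after the extra summands die --- logically tighter, since that corollary already asserts the full equality, and Proposition \ref{KernelTensor} becomes unnecessary. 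What the paper's induction buys is the explicit formula $\widetilde{M}(i)=(i-1)\binom{m+1}{i}$; your method recovers the same numbers by expanding the polynomial, giving $b_i=m\binom{m}{i-1}-\binom{m}{i}$ (which equals $(i-1)\binom{m+1}{i}$, and incidentally corrects the off-by-one expression $m\binom{m}{i}-\binom{m}{i+1}$ printed in the proof of Theorem \ref{3.23Thm}), and it establishes the Poincar\'e-series identity $1-\sum_{i=2}^{m+1}b_iq^i=(1-mq)(1+q)^m$ of Theorem \ref{thm main}(3) in the same stroke rather than as a separate verification.
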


\begin{proof}
    Firstly, it is a direct sum from the Corollary \ref{CorollaryofDS}. And we have $$({\ker\sigma_{n-2}}\otimes {\widetilde{\ker}\sigma_{2}})\oplus \cdots \oplus( {\ker\sigma_{n-m-1}}\otimes {\widetilde{\ker}\sigma_{m+1}})\subset \ker\sigma_{n}$$ from Proposition \ref{KernelTensor}. Thus we only need to show the dimensions match, for which, we now prove $\dim({\widetilde{\ker}\sigma_{i}})=(i-1)\binom{m+1}{i}$. For $i=2$, we have $${\widetilde{\ker}\sigma_{2}}=\ker\sigma_{2}=span_{\mathbb{K}}\{v_{i}\otimes v_{i}\, ,v_{i}\otimes v_{j}+y^{2}v_{j}\otimes v_{i}\mid i<j\}$$ which implies  $\dim({\widetilde{\ker}\sigma_{2}})=\binom{m}{1}+\binom{m}{2}=\binom{m+1}{2}$. Assuming that $\forall k<i\, ,\dim({\widetilde{\ker}\sigma_{k}})=(k-1)\binom{m+1}{k}$, we have the following by \ref{FomulaLemma}(2). $$\dim({\widetilde{\ker}\sigma_{i})}=M(i)-\sum_{k=1}^{i-1}\dim({\widetilde{\ker}\sigma_{k}})M(i-k)=M(i)-\sum_{k=1}^{i-1}(k-1)\binom{m+1}{k}M(i-k)=(i-1)\binom{m+1}{i}$$
    By \ref{FomulaLemma}(2) again, we have $$\dim(\ker\sigma_{n-2}\otimes {\widetilde{\ker}\sigma_{2}}\oplus \cdots \oplus \ker\sigma_{n-m-1}\otimes {\widetilde{\ker}\sigma_{m+1}})=\dim( \ker\sigma_{n}).$$
    Therefore, the claim follows.
\end{proof}

Theorem \ref{KernelThm} answers Part $(3)$ in Theorem \ref{thm main}.
\begin{theorem} \label{3.23Thm}
    The graded vector space $B(V)$ is a free algebra generated by the chosen basis of $\widetilde{\ker}\sigma_{i}$, for $2 \le i \le m+1$. Let $b_{i}=\dim\widetilde{\ker}\sigma_{i}$, we have $1-\sum\limits_{i=2}^{m+1}b_iq^i=(1-mq)(1+q)^m$. 
\end{theorem}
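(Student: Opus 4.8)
The plan is to prove the two assertions in turn, treating freeness as the substantive claim and deriving the generating-function identity as a formal consequence. Write $W=\bigoplus_{i=2}^{m+1}\widetilde{\ker}\sigma_i$ for the graded space spanned by the chosen generators, with grading $W_i=\widetilde{\ker}\sigma_i$. Since the multiplication on $B(V)$ is the tensor product (Proposition \ref{KernelTensor}), the inclusion $W\hookrightarrow B(V)$ extends, by the universal property of the tensor algebra, to a homomorphism of graded algebras $\Phi\colon T(W)\to B(V)$, and Corollary \ref{CorollaryofDS} shows that $\Phi$ is surjective. Everything then reduces to proving that $\Phi$ is injective.

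For injectivity I would induct on the degree $n$, comparing the two sides degree by degree. The tensor algebra obeys the ``last letter'' recursion $T(W)_n=W_n\oplus\bigoplus_{k<n}T(W)_{n-k}\otimes W_k$ with $T(W)_0=\mathbb{K}$. On the other side, Corollary \ref{CorollaryofDS}, together with Theorem \ref{KernelThm} (which kills $\widetilde{\ker}\sigma_k$ for $k>m+1$) and the initial condition $\widetilde{\ker}\sigma_1=\{0\}$, gives the matching internal direct sum $\ker\sigma_n=\widetilde{\ker}\sigma_n\oplus\bigoplus_{k<n}(\ker\sigma_{n-k}\otimes\widetilde{\ker}\sigma_k)$, in which each summand $\ker\sigma_{n-k}\otimes\widetilde{\ker}\sigma_k$ sits inside $V^{\otimes n}$ exactly as the image of the multiplication map. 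Because $\Phi$ carries $T(W)_{n-k}\otimes W_k$ onto $\ker\sigma_{n-k}\otimes\widetilde{\ker}\sigma_k$ via $\Phi_{n-k}\otimes\mathrm{id}$, an isomorphism by the inductive hypothesis, and carries $W_n$ identically onto $\widetilde{\ker}\sigma_n$, it matches the two decompositions summand by summand; hence $\Phi_n$ is an isomorphism. This establishes that $B(V)$ is free on the chosen basis of $\bigoplus_i\widetilde{\ker}\sigma_i$.

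With freeness in hand the identity follows by comparing Hilbert series. The Hilbert series of $B(V)$ is $\sum_n M(n)q^n$; by freeness this equals $\frac{1}{1-\sum_{i=2}^{m+1}b_iq^i}$, the standard Hilbert series of a tensor algebra on generators with generating function $\sum_i b_iq^i$ (note $b_0=b_1=0$, so the geometric series is a well-defined formal power series). Independently, Lemma \ref{FomulaLemma}(1), after multiplying by $q^n$ and summing over $n\ge 0$, reads $\frac{1}{1-mq}=(1+q)^m\sum_n M(n)q^n$, whence $\sum_n M(n)q^n=\frac{1}{(1-mq)(1+q)^m}$. Equating the two expressions for the Hilbert series and inverting yields $1-\sum_{i=2}^{m+1}b_iq^i=(1-mq)(1+q)^m$.

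The main obstacle is the injectivity step: one must check that the tensor product in Corollary \ref{CorollaryofDS} is literally the product of $B(V)$ (so that $\Phi$ genuinely intertwines the two ``last letter'' recursions), and that the internal sum $\bigoplus_{k<n}(\ker\sigma_{n-k}\otimes\widetilde{\ker}\sigma_k)$ is truly direct inside $V^{\otimes n}$, which is exactly what Corollary \ref{CorollaryofDS} and the preceding proposition supply. As a sanity check, the closed form $b_i=(i-1)\binom{m+1}{i}$ from the proof of Theorem \ref{KernelThm} satisfies the claimed identity directly, since $\binom{m}{i}-m\binom{m}{i-1}=-(i-1)\binom{m+1}{i}$.
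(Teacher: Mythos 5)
Your proposal is correct, and it differs from the paper's proof in two instructive ways, even though the skeleton is shared: like the paper, you obtain generation in degrees $2\le i\le m+1$ from Corollary \ref{CorollaryofDS} and Theorem \ref{KernelThm}. First, you actually \emph{prove} freeness: the paper's proof only invokes Corollary \ref{CorollaryofDS} for generation and leaves the injectivity of $T(W)\to B(V)$ implicit in the directness of the sums, whereas your degree-by-degree induction, matching the last-letter recursion $T(W)_n=W_n\oplus\bigoplus_{k<n}T(W)_{n-k}\otimes W_k$ against the internal direct sum $\ker\sigma_{n}=\widetilde{\ker}\sigma_{n}\oplus\bigoplus_{k<n}(\ker\sigma_{n-k}\otimes\widetilde{\ker}\sigma_{k})$, supplies exactly the missing step; the one point to keep in view (which you do) is that over the field $\mathbb{K}$ the space $\ker\sigma_{n-k}\otimes\widetilde{\ker}\sigma_{k}$ embeds in $V^{\otimes n}$ as the image of the multiplication map. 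Second, for the generating-function identity the paper substitutes the closed form $b_i=(i-1)\binom{m+1}{i}$, obtained inductively in the proof of Theorem \ref{KernelThm} from Lemma \ref{FomulaLemma}(2), and matches coefficients of $(1-mq)(1+q)^m$ by a binomial identity; you instead bypass the closed form entirely, converting Lemma \ref{FomulaLemma}(1) into the Hilbert-series equation $\sum_n M(n)q^n=\bigl((1-mq)(1+q)^m\bigr)^{-1}=\bigl(1-\sum_i b_iq^i\bigr)^{-1}$ and inverting in $\mathbb{K}[[q]]$, which is legitimate since both denominators have constant term $1$. Your route buys a small bonus: had you run the same freeness induction with all of $\bigoplus_{i\ge 2}\widetilde{\ker}\sigma_i$ as generators, the identity would force $b_i=0$ for $i>m+1$ (the right-hand side has degree $m+1$), giving an independent re-derivation of the vanishing part of Theorem \ref{KernelThm}; as written you merely cite that theorem, so there is no circularity. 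Finally, your sanity check $\binom{m}{i}-m\binom{m}{i-1}=-(i-1)\binom{m+1}{i}$ is the correct coefficient computation, and it in fact flags an index shift in the paper's displayed formula, where $m\binom{m}{i}-\binom{m}{i+1}$ should read $m\binom{m}{i-1}-\binom{m}{i}$.
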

\begin{proof}
From Corollary \ref{CorollaryofDS}, we see that $B(V)$ is generated by $\widetilde{\ker}\sigma_{i}$ and by Theorem \ref{KernelThm}, we only need to take $2\leq i \leq m+1$. For the formula, we have
    $$b_{i}=\dim( \widetilde{\ker}\sigma_{i})=(i-1)\binom{m+1}{i}=m\binom{m}{i}-\binom{m}{i+1},$$ where $m\binom{m}{i}-\binom{m}{i+1}$ is exactly the coefficient of $q^{i}$ after expanding $(1-mq)(1+q)^m$.
\end{proof}

When $m=2,3$, we are able to provide a generating set for $\ker\sigma_{n}$ explicitly.

 \begin{proposition}
    If $x\in \ker\sigma_{n-1}$, $x=\sum_{\alpha=1}^{N} f_{\alpha}v_{\alpha_{1}}\otimes \cdots\otimes v_{\alpha_{n-1}}$ where $f_{\alpha}\in \mathbb{K}$, then for all $k\le min\{\alpha_{i}\mid1\le \alpha\le N,1\le i\le n-1\}$, we have $${\omega}_{k}(x):=v_{k}\otimes x +(-1)^{n}  x\otimes v_{k}\in \ker\sigma_{n}.$$
\end{proposition}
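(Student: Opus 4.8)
The plan is to compute $\sigma_n(\omega_k(x))$ directly, handling the two summands $v_k\otimes x$ and $x\otimes v_k$ separately, and then check that their images cancel.

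First I would dispose of $v_k\otimes x$. Since $x\in\ker\sigma_{n-1}$, the tensor $v_k\otimes x$ lies in $V\otimes\ker\sigma_{n-1}$, and by Lemma \ref{sigL}(2) (taken with $k=1$ there) $\sigma_n$ acts on this subspace as $\sigma_1\otimes\mathrm{id}_{V^{\otimes n-1}}=\mathrm{id}$. Hence $\sigma_n(v_k\otimes x)=v_k\otimes x$; this is just the statement, recorded in Theorem \ref{main}, that $V\otimes\ker\sigma_{n-1}$ is the eigenspace of eigenvalue $[1]_{y^2}=1$.

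Next I would treat $x\otimes v_k$. Splitting off the top term in the definition $\sigma_n=\sum_{i=1}^n(-1)^{i-1}d_i^n$, and using $d_i^n=d_i^{n-1}\otimes\mathrm{id}_V$ for $i\le n-1$ (equivalently Lemma \ref{sigL}(1) with the cut at $n-1$), gives $\sigma_n=\sigma_{n-1}\otimes\mathrm{id}_V+(-1)^{n-1}d_n^n$. Because $x\in\ker\sigma_{n-1}$, the first summand annihilates $x\otimes v_k$, so $\sigma_n(x\otimes v_k)=(-1)^{n-1}d_n^n(x\otimes v_k)$, and everything reduces to evaluating $d_n^n$ on $x\otimes v_k$. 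The heart of the argument, and the only place where the hypothesis $k\le\min\{\alpha_i\}$ is used, is the claim $d_n^n(x\otimes v_k)=v_k\otimes x$. Here I would invoke the explicit operator $R_m$ of Example \ref{Example}: for any index $a\ge k$ one has $R_m(v_a\otimes v_k)=v_k\otimes v_a$, since for $a>k$ this is the rule $v_j\otimes v_i\mapsto v_i\otimes v_j$ and for $a=k$ it is the diagonal rule. The crucial point is that the mixed term $y^2 v_a\otimes v_k$ never arises, precisely because $v_k$ sits on the \emph{right} of each pair. Now $d_n^n$ is exactly the cascade of $R$'s dragging the last tensor factor leftward past all the others; applied to a basis monomial $v_{\alpha_1}\otimes\cdots\otimes v_{\alpha_{n-1}}\otimes v_k$, each successive application of $R_m$ transposes $v_k$ with the adjacent $v_{\alpha_i}$ (legitimate since every $\alpha_i\ge k$), so $v_k$ migrates all the way to the front, yielding $v_k\otimes v_{\alpha_1}\otimes\cdots\otimes v_{\alpha_{n-1}}$. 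By linearity, $d_n^n(x\otimes v_k)=v_k\otimes x$.

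Finally I would combine the two computations: $\sigma_n(\omega_k(x))=\sigma_n(v_k\otimes x)+(-1)^n\sigma_n(x\otimes v_k)=v_k\otimes x+(-1)^n(-1)^{n-1}v_k\otimes x=v_k\otimes x-v_k\otimes x=0$, so $\omega_k(x)\in\ker\sigma_n$. The only genuinely delicate step is verifying that the minimality condition on $k$ forces every $R_m$ encountered in the cascade to act as a pure transposition rather than producing a mixed term; once that is secured, the remainder is sign bookkeeping.
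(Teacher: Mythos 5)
Your proof is correct and is exactly the paper's argument: the paper disposes of the claim with the one-line ``direct calculation'' $\sigma_{n}(v_{k}\otimes x+(-1)^{n}x\otimes v_{k})=v_{k}\otimes x-v_{k}\otimes x=0$, and your computation simply makes that calculation explicit. In particular, your two ingredients --- that $V\otimes\ker\sigma_{n-1}$ is the eigenvalue-$1$ eigenspace so $\sigma_{n}(v_{k}\otimes x)=v_{k}\otimes x$, and that the splitting $\sigma_{n}=\sigma_{n-1}\otimes\mathrm{id}_{V}+(-1)^{n-1}d_{n}^{n}$ together with the minimality of $k$ (forcing each $R_{m}$ in the cascade to act as a pure transposition) gives $\sigma_{n}(x\otimes v_{k})=(-1)^{n-1}v_{k}\otimes x$ --- are precisely what the paper's terse proof relies on.
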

\begin{proof}
    We have $\sigma_{n}(v_{i}\otimes x+(-1)^{n}x\otimes v_{i})=v_{i}\otimes x-v_{i}\otimes x=0$ by direct calculation.
\end{proof}

As corollaries of Theorem \ref{3.23Thm}, we provide the following examples.

\begin{example}
    
\label{m=2}
    For $m=2$, $\ker\sigma_{n}$ is generated by ${\widetilde{\ker}\sigma_{2}}$ and ${\widetilde{\ker}\sigma_{3}}$, where 
\[
\begin{aligned}
 &{\widetilde{\ker}\sigma_{2}}=span_{\mathbb{K}}\{v_{1}\otimes v_{1},v_{2}\otimes v_{2},v_{1}\otimes v_{2}+y^{2}v_{2}\otimes v_{1}\},\\
 &{\widetilde{\ker}\sigma_{3}}=span_{\mathbb{K}}\{\omega_{1}(v_{2}\otimes v_{2}), \omega_{1}(v_{1}\otimes v_{2}+y^2v_{2}\otimes v_{1})
 \}   .
\end{aligned}
\]
    
\end{example}

\begin{example}
    For $m=3$, $\ker\sigma_{n}$ is generated by ${\widetilde{\ker}\sigma_{2}}$, ${\widetilde{\ker}\sigma_{3}}$ and ${\widetilde{\ker}\sigma_{4}}$ with dimension $6,8,3$ respectively, where
    \[
    \begin{aligned}
        {\widetilde{\ker}\sigma_{2}}=span_{\mathbb{K}}& \{v_{i}\otimes v_{i},v_{i}\otimes v_{j}+y^2v_{j}\otimes v_{i}|1\le i< j\le 3\}
        \\{\widetilde{\ker}\sigma_{3}}=span_{\mathbb{K}}&\{\omega_{i}(v_{j}\otimes v_{j}),\omega _{s}(v_{i}\otimes v_{j}+y^2 v_{j}\otimes v_{i})\mid  1\le s\le i<j\le 3\}\cup \{y^2[v_{1},v_{2},v_{3}]+\\&(1+y^2+y^4)(v_{1}\otimes v_{3}\otimes v_{2}-v_{2}\otimes v_{3}\otimes v_{1})\}
        \\{\widetilde{\ker}\sigma_{4}}=span_{\mathbb{K}}&\{\omega_{1}\circ \omega_{2}(v_{3}\otimes v_{3}),\omega_{1}\circ \omega_{2}(v_{2}\otimes v_{3}+y^2 v_{3}\otimes v_{2}),\\& \omega_{1}(y^2[v_{1},v_{2},v_{3}]+(1+y^2+y^4)(v_{1}\otimes v_{3}\otimes v_{2}-v_{2}\otimes v_{3}\otimes v_{1}))\}
    \end{aligned}
    \]
\end{example}
\begin{remark}
Notice that when $m=3$,    $\widetilde{\ker}\sigma_{4}\subsetneqq \ker\sigma_{4}$.
\end{remark}

 Let $M_{l}=\mathbb{K}^{    \oplus l}$ with chosen basis, and $v_i$ acting on $M_{l}$ is represented by matrix multiplication from right by a matrix $A_i$ for $1\leq i \leq m$. When $A_iA_j=A_jA_i$ for all $1\leq i,j \leq m$, $M_{l}$ becomes a $V_m$-module and we denote it by $(M_l;A_1,...,A_m)$. As one application, we compute the Betti number of the one-term Yang-Baxter homology with coefficient in $(M_l;A_1,...,A_m)$.

\begin{example}
    Consider the one-term Yang-Baxter homology with coefficient in the $l$-dimensional $V_{m}$-module $(M_l;A_1,...,A_m)$, $H^{YB}_{n}(M_l;A_1,...,A_m)$. The dimension of $H^{YB}_{n}(M_l;A_1,...,A_m)$ is:
    \[
    l\cdot m^{n}-r_{1}M(n)-(r_{1}+r_{2})M(n-1)-\cdots -(r_{m-1}+r_{m})M(n-m+1)-r_{m}M(n-m).
    \]
    where $$\begin{aligned}
        &r_{k}:=\dim(M{[V]_{k}})\\
        &r_{1}:=\dim(MV)=rank\begin{pmatrix}
A_{1} \\
A_{2} \\
\cdots \\
A_{m}
\end{pmatrix}
    \end{aligned}$$
    with $M[V]_{k}:=R_{M}\otimes{\rm id}_{V^{\otimes k-1} }(M\otimes [V]_{k})$.
    
\end{example}
\begin{proof}
 The dimension of $H^{YB}_{n}(M;A_{1},\cdots,A_{m})$ is 
 \[
 l\cdot m^{n}-\dim({\rm im}\,\partial^{YB}_{n+1})-\dim({\rm im}\,\partial^{YB}_{n})
 \]
 and we have \[
   {\rm im}\,\partial^{YB}_{n+1}=MV\otimes \ker\sigma_{n}\oplus M[V]_{2}\otimes \ker\sigma_{n-1} \oplus \cdots \oplus M[V]_{n+1}.
   \]
   As a demonstration, we give an explicit computation.
\begin{example}
    For $l=3,m=3$, we have \[r_{1}=rank\begin{pmatrix} A_{1}\\ A_{2}\\A_{3}\end{pmatrix};\  r_{2}=rank\begin{pmatrix}
  -A_{2}&A_{1}  &0 \\
  -A_{3}& 0 &A_{1} \\
  0&-A_{3}  & A_{2}
  \end{pmatrix};\  r_{3}=rank\begin{pmatrix}
  A_{1}&  A_{2}&A_{3}
\end{pmatrix}.\]
Consider \[
  A_{1}=\begin{pmatrix}
  1& 0 &1 \\
 0 & 0 & 1\\
 0 & 0 &0
\end{pmatrix},\  A_{2}=\begin{pmatrix}
 1 &  0& 1\\
 0 & 0 &0 \\
0  &0  &0
\end{pmatrix},\ A_{3}=\begin{pmatrix}
1  & 0 &1 \\
0  &0  & 0\\
0  & 0 &0
\end{pmatrix}
  \] 
We have $r_{1}=2,\ r_{2}=4,\ r_{3}=2$. Thus, the dimension of $H_{n}(M=\mathbb{K}^{\oplus3};A_{1},A_{2},A_{3})$ is $$3\cdot3^{n}-2M(n)-(2+4)M(n-1)-(4+2)M(n-2)-2M(n-3)=3\cdot 3^{n}-2\cdot 3^{n}=3^{n}.$$
\end{example}
\end{proof}

\section*{Acknowledgements}
We would like to thank Zhiyun Cheng for valuable discussions. The first author was supported by the National Natural Science Foundation of China (Grant NO. 11971256 and 12471064). The second author was supported by the National Natural Science Foundation of China (Grant No. 11901229, 12371029, 22341304 and W2412041).

\include{YB/another}
\newpage

\end{document}